\documentclass{amsart}
\usepackage{amssymb}
\usepackage{graphicx}

\numberwithin{equation}{section}
\newtheorem{Def}{Definition}
\newtheorem{Thm}{Theorem}
\newtheorem{Rmk}{Remark}
\newtheorem{Cor}{Corollary}
\newtheorem{Lem}{Lemma}

\newtheorem{Ex}{Example}
\newtheorem{Obs}{Observation}


\date{}
\pagestyle{headings}

\begin{document}

\title[Curvature and the spectrum of the graph Laplace operator]{Ollivier-Ricci curvature and the spectrum of the normalized graph Laplace operator}
\author{Frank Bauer}
 \email{Frank.Bauer@mis.mpg.de}
\address{Max Planck Institute for Mathematics in the Sciences\\
04103 Leipzig, Germany.}

\author[1,2,3]{J\"urgen Jost}
 \email{jost@mis.mpg.de}
\address{Max Planck Institute for Mathematics in the Sciences\\
04103 Leipzig, Germany.}
\address{Department of Mathematics and Computer Science \\University of
Leipzig \\04109 Leipzig, Germany }
\address{Santa Fe Institute for the Sciences of Complexity, Santa Fe,
NM 87501, USA}
\author[1,4]{Shiping Liu}
\email{shiping@mis.mpg.de}
\address{Max Planck Institute for Mathematics in the Sciences\\
04103 Leipzig, Germany.}
\address{Academy of Mathematics and
Systems Science, Chinese Academy of Sciences, Beijing 100190,
China.}
\begin{abstract} We prove the following estimate for the spectrum of the normalized Laplace operator $\Delta$ on a finite graph $G$,
\begin{equation*}
1- (1- k[t])^{\frac{1}{t}}\leq \lambda_1 \leq \cdots \leq
\lambda_{N-1}\leq 1+ (1- k[t])^{\frac{1}{t}}, \,\forall
\,\,\text{integers}\,\, t\geq 1.
\end{equation*}
Here $k[t]$ is a lower bound for the Ollivier-Ricci curvature on
the neighborhood graph $G[t]$, which was introduced by Bauer-Jost.
In particular, when $t=1$ this is Ollivier's estimates $k\leq
\lambda_1\leq \ldots \leq \lambda_{N-1}\leq 2-k$. For sufficiently
large $t$ we show that, unless $G$ is bipartite, our estimates for
$\lambda_1$ and $\lambda_{N-1}$ are always nontrivial and improve
Ollivier's estimates for all graphs with $k\leq 0$. By definition
neighborhood graphs are weighted graphs which may have loops. To
understand the Ollivier-Ricci curvature on neighborhood graphs, we
generalize a sharp estimate of the Ricci curvature given by
Jost-Liu to weighted graphs with loops and relate it to the
relative local frequency
of triangles and loops. 
\end{abstract}
\maketitle
\section{Introduction}
In this paper, we utilize techniques inspired by Riemannian
geometry and the theory of stochastic processes in order to
control eigenvalues of graphs. In particular, we shall quantify
the deviation of a (connected, undirected, weighted, finite) graph
$G$ from being bipartite (a bipartite graph is one without cycles
of odd lengths; equivalently, its vertex set can be split into two
classes such that edges can be present only between vertices from
different classes) in terms of a spectral gap. The operator whose
spectrum we shall consider here is the normalized graph Laplacian
$\Delta$. This is the operator underlying random walks on graphs,
and so, this leads to a natural connection with the theory of
stochastic processes. We observe that on a bipartite graph, a
random walker, starting at a vertex $x$ at time 0 and at each step
hopping to one of the neighbors of the vertex where it currently
sits, can revisit $x$ only at even times. This connection then
will be explored via the eigenvalues of $\Delta$. More precisely,
the largest eigenvalue $\lambda_{N-1}$ of $\Delta$ is 2 iff $G$ is
bipartite and is $<2$ else. Therefore, $2-\lambda_{N-1}$
quantifies the deviation of $G$ from being bipartite, and we want
to understand this aspect in more detail. In more general terms,
we are asking for a quantitative connection between the geometry
(of the graph $G$) and the analysis (of the operator $\Delta$, or
the random walk encoded by it). Now, such connections have been
explored systematically in Riemannian geometry, and many
eigenvalue estimates are known there that connect the
corresponding Laplace operator with the geometry of the underlying
space $M$, see e.g. Li-Yau \cite{LiYau}, Chavel \cite{Chavel}. The
crucial role here is played by the Ricci curvature of $M$. In
recent years, a kind of axiomatic approach to curvature has been
developed. This approach encodes the abstract formal properties of
curvature and thereby makes the notion extendible to spaces more
general than Riemannian manifolds. By now, there exist many
notions of generalized curvature, and several of them have found
important applications, see Sturm \cite{Sturm1}, Lott-Villani
\cite{LV}, Ollivier \cite{Oll}, Ohta \cite{Ohta}, Bonciocat-Sturm
\cite{BS}, Joulin-Ollivier \cite{JO} and the references therein.
The curvature notion that turns out to be most useful for our
purposes is the one introduced by Ollivier \cite{Oll}. In his
paper, Olliver actually showed that the eigenvalues of the
normalized Laplace operator satisfy
\begin{equation}
\label{olli} k\leq \lambda_1\leq \ldots\leq \lambda_{N-1}\leq 2-k.
\end{equation}
In fact, one of the main points of the present paper is to relate
lower bounds for $\lambda_1$ and upper bounds for $\lambda_{N-1}$
via random walks. As in Bauer-Jost \cite{BJ}, we translate this
relationship into the geometric concept of a neighborhood graph.
The idea here is that in the $t$-th neighborhood graph $G[t]$ of
$G$, vertices $x$ and $y$ are connected by an edge with a weight
given by the probability that a random walker starting at $x$
reaches $y$ after $t$ steps times the degree of $x$. We note that
even though the original graph may have been unweighted, the
neighborhood graphs $G[t]$ are necessarily weighted. In addition,
they will in general possess self-loops, because the random walker
starting at $x$ may return to $x$ after $t$ steps. Therefore, we
need to develop our theory on weighted graphs with self-loops even
though the original $G$ might have been unweighted and without
such loops. Since Ollivier's curvature is defined in terms of
transportation distances (Wasserstein metrics), we can then use
our neighborhood graphs in order to geometrically control the
transportation costs and thereby to estimate the curvature of the
neighborhood graphs in terms of the curvature of the original
graph. As it turns out that lower bounds for the smallest
eigenvalue of $G[t]$, $t$ even, are related to upper bounds for
the largest eigenvalue of $G$, we obtain the following more
general estimate
\begin{equation}
\label{ours} 1- (1- k[t])^{\frac{1}{t}}\leq \lambda_1 \leq \cdots
\leq \lambda_{N-1}\leq 1+ (1- k[t])^{\frac{1}{t}}, \,\forall
\,\,\text{integers}\,\, t\geq 1.
\end{equation}
Whereas (\ref{olli}) is only useful under the restrictive
assumption that $k$ be positive, our estimate (\ref{ours}) is
nontrivial for any graph that is not bipartite. In fact, for a non-bipartite graph, we obtain an exponential decay of $1-k[t]$ with a rate that can be controlled by the geometry of the graph.

For controlling the smallest eigenvalue, besides Ollivier
\cite{Oll}, we also refer to Lin-Yau \cite{LinYau} and Jost-Liu
\cite{JL}. In particular, in the last paper, we could relate
$\lambda_1$ to the local clustering coefficient introduced in
Watts-Strogatz \cite{WS}. The local clustering coefficients
measures the relative local frequency of triangles, that is,
cycles of length 3. Since bipartite graphs cannot possess any
triangles, this then is obviously related to our question about
quantifying the deviation of the given graph $G$ from being
bipartite. In fact, in Jost-Liu \cite{JL}, this local clustering
has been controlled in terms of Ollivier's Ricci curvature. Thus,
in the present paper we are closing the loop between the geometric
properties of a graph $G$, the spectrum of its graph Laplacian,
random walks on $G$, and the generalized curvature of $G$, drawing
upon deep ideas and concepts originally developed in Riemannian
geometry and the theory of stochastic processes.

\section{The normalized Laplace operator, neighborhood graphs, and Ollivier-Ricci curvature}
In this paper, $G=(V, E)$ will denote an undirected, weighted,
connected, finite graph of $N$ vertices. We do not exclude loops,
i.e., we permit the existence of an edge between a vertex and
itself. $V$ denotes the set of vertices and $E$ denotes the set of
edges. If two vertices $x, y\in V$ are connected by an edge, we
say $x$ and $y$ are neighbors, in symbols $x\sim y$. The
associated weight function $w$: $V\times V\rightarrow \mathbb{R}$
satisfies $w_{xy}=w_{yx}$ (because the graph is undirected)  and
we assume $w_{xy}>0$ whenever $x\sim y$ and $w_{xy}=0$ iff
$x\not\sim y$. For a vertex $x\in V$, its degree $d_x$ is defined
as $d_x:=\sum_{y\in V}w_{xy}$. If $w_{xy}=1$ whenever $x\sim y$,
we shall call the graph an unweighted one. We will also consider a
locally finite graph $\tilde{G}=(\tilde{V}, \tilde{E})$, which is
an undirected, weighted, connected graph with a possible infinite
number of vertices that satisfies the property that for every
$x\in \tilde{V}$, the number of edges connected to $x$ is finite.

\subsection{The normalized graph Laplace operator and its eigenvalues}
In this subsection, we recall the definition of the normalized
graph Laplace operator and state some of its basic properties. In
particular, we will emphasize the relations between eigenvalues of
the Laplace operator and random walks on graphs.

Let $C(V)$ denote the space of all real-valued functions on the
set $V$ and let $m_x(\cdot)$ be a probability measure attached to
a vertex $x\in V$.
\begin{Def}
The Laplace operator $\Delta:C(V)\to C(V)$ is pointwise defined by
\begin{equation}\label{1}
\Delta f(x)=\sum_{y\in V}f(y)m_x(y)-f(x),\,\,\,\forall x\in V.
\end{equation}
\end{Def}
The measure $m_x(\cdot)$ can also be considered as the
distribution of a $1$-step random walk starting from $x$. We will
choose
\begin{equation}\label{2}
 m_x(y)=\left\{
          \begin{array}{ll}
            \frac{w_{xy}}{d_x}, & \hbox{if $y \sim x$,} \\
            0, & \hbox{otherwise,}
          \end{array}
        \right.
\end{equation}
in the following. Note that $x\sim x$ is possible when $x$ has a
loop. On a graph $G$ without loops, we can also consider a lazy
random walk. A lazy random walk is a random walk that does not
move with a certain probability, i.e. for some $x$ we might have
$m_x(x)\neq 0$. In this case, the lazy random walk on $G$ is
equivalent to the usual random walk on the graph
$G^{\mathrm{lazy}}$ that is obtained from $G$ by adding for every
vertex $x$ a loop with the weight $(d_x+m_x(x)) m_x(x)$, where
$d_x$ is the degree of $x$ in $G$.

With the family \eqref{2} of probability measures
$\{m_x(\cdot)\}$, $\Delta$ is just the normalized graph Laplace
operator studied for instance in \cite{Grigoryan,BJ} and is
unitarily equivalent to the Laplace operator studied in
\cite{Chung}.

We also have a natural measure $\mu$ on the whole set $V$,
$\mu(x):=d_x,$ which gives an inner product structure on $C(V)$.
\begin{Def}
The inner product of two functions $f, g\in C(V)$ is defined as
\begin{equation}\label{3}
(f, g)_{\mu}=\sum_{x\in V}f(x)g(x)\mu(x).
\end{equation}
\end{Def}
With this inner product $C(V)$ becomes a Hilbert space, and we can
write $C(V)=l^2(V,\mu)$. By the definition of the degree and the
symmetry of the weight function, we can check that
\begin{itemize}
  \item $\mu$ is invariant w.r.t. $\{m_x(\cdot)\}$, i.e. $\sum_{x\in V}m_x(y)\mu(x)=\mu(y)$, $\forall y\in V$;
  \item $\mu$ is reversible w.r.t. $\{m_x(\cdot)\}$, i.e. $m_x(y)\mu(x)=m_y(x)\mu(y)$, $\forall x, y\in V$.
\end{itemize}
These two facts imply immediately that the operator $\Delta$ is
nonpositive and self-adjoint on the space $l^2(V,\mu)$.
We call $\lambda$ an eigenvalue of $\Delta$ if there exists some
$f\not\equiv 0$ such that $\Delta f=-\lambda f$. Using this
convention it follows from the observation that $\Delta$ is
self-adjoint and nonpositive that all its eigenvalues are real and
nonnegative. In fact, it's well known that (see e.g. Chung
\cite{Chung})
$0=\lambda_0\leq\lambda_1\leq\cdots\leq\lambda_{N-1}\leq 2$. Since
our graph is connected we actually have $0<\lambda_1$. In Chung
\cite{Chung} it is shown, by proving a discrete version of the
Cheeger inequality, that $\lambda_1$ is a measure for how
easy/difficult it is to cut the graph into two large pieces.
Furthermore, it is well known that $\lambda_{N-1} = 2$ if and only
if $G$ is bipartite. In Bauer-Jost \cite{BJ} a Cheeger type
estimate for the largest eigenvalue $\lambda_{N-1}$ was obtained.
The results in Bauer-Jost \cite{BJ} show that $\lambda_{N-1}$ is a
measure for how close (the meaning of close is made precise in
\cite{BJ}) a graph is to a bipartite one. In the following, we
will call $\lambda_1$ the first eigenvalue and $\lambda_{N-1}$ the
largest eigenvalue of the operator $\Delta$.
\subsection{Neighborhood graphs}In this section, we discuss the deep relationship between
eigenvalues estimates for the Laplace operator $\Delta$ and random
walks on the graph $G$. In particular, we recall the neighborhood
graph method developed by Bauer-Jost \cite{BJ}.

We first introduce the following notation. For a probability
measure $\mu$, we denote
$$\mu P(\cdot):=\sum_{x}\mu(x)m_{x}(\cdot).$$
Let $\delta_x$ be the Dirac measure at $x$, then we can write
$\delta_x P^1(\cdot):=\delta_x P(\cdot)=m_x(\cdot)$. Therefore the
distribution of a $t$-step random walk starting from $x$ with a
transition probability $m_x$ is
\begin{equation}
\delta_x
P^t(\cdot)=\sum_{x_1,\ldots,x_{t-1}}m_x(x_1)m_{x_1}(x_2)\cdots
m_{x_{t-1}}(\cdot)
\end{equation}for $t>1$.
The idea is now to define a family of graphs $G[t]$, $t\geq1$ that
encodes the transition probabilities of the $t$-step random walks
on the graph $G$.
\begin{Def}The neighborhood graph $G[t]=(V, E[t])$ of the graph $G=(V,E)$ of order $t\geq 1$
has the same vertex set as $G$ and the weights of the edges of
$G[t]$ are defined in terms of the transitions probabilities of
the $t$-step random walk,
\begin{equation}\label{4}
w_{xy}[t] := \delta_xP^t(y)d_x.
\end{equation}
\end{Def}In particular, $G = G[1]$ and $x\sim y$ in $G[t]$ if and only if there
exists a path of length $t$ between $x$ and $y$ in $G$. It is easy
to see that the neighborhood graph $G[t]$ is in general a weighted
graph with loops, even if the original graph $G$ is an unweighted,
simple graph. Moreover, we note here that the neighborhood graph
method is related to the discrete heat kernel $p_t(x,y)$ (see e.g.
\cite{ACG} and the references therein) on graphs by $$p_t(x,y) =
\frac{w_{xy}[t]}{d_xd_y}.$$


\begin{Lem}[Bauer-Jost \cite{BJ}]\label{L4}The neighborhood graph $G[t]$ has the
following properties:

\begin{itemize}
\item[(i)]If $t$ is even, then $G[t]$ is connected if and only if
$G$ is not bipartite. Furthermore, if $t$ is even, $G[t]$ is not
bipartite. \item[(ii)]If $t$ is odd, then $G[t]$ is always
connected and $G[t]$ is bipartite iff $G$ is bipartite.
\item[(iii)] $d_x[t] = d_x$ for all $x\in V$.
\end{itemize}
\end{Lem}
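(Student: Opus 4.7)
The plan is to translate each assertion into a statement about walk lengths in $G$. The key observation is that two vertices $x,y$ are neighbors in $G[t]$ iff there exists a walk of length exactly $t$ from $x$ to $y$ in $G$; more generally, a walk consisting of $k$ edges in $G[t]$ corresponds to a walk of length $kt$ in $G$. Hence connectivity and bipartiteness of $G[t]$ reduce to questions about the existence and parity of walks of length $kt$ in $G$. Statement (iii) drops out as a one-line computation: since $\delta_x P^t$ is a probability measure, $d_x[t]=\sum_y w_{xy}[t]=d_x\sum_y \delta_x P^t(y)=d_x$.

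For (i) I would split the proof into two directions. If $G$ is bipartite with classes $A\sqcup B$, any walk of even length stays in its starting class, so for even $t$ no edge of $G[t]$ crosses $A$ and $B$ and $G[t]$ is disconnected. Conversely, if $G$ is not bipartite, the random walk driven by $\{m_x\}$ is irreducible and aperiodic; hence for every pair $x,y$ there is $N_{xy}$ with $\delta_x P^n(y)>0$ for every $n\geq N_{xy}$, and choosing $k$ with $kt\geq N_{xy}$ yields a path from $x$ to $y$ in $G[t]$. Finally, for any even $t\geq 2$ and any vertex $x$ with at least one neighbor $y$, repeating the edge $xy$ back and forth $t/2$ times gives $w_{xx}[t]>0$, so $G[t]$ has a loop at every vertex; as a loop is a cycle of odd length one, $G[t]$ is not bipartite.

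For (ii) I would handle connectivity and bipartiteness separately. Connectivity: in the non-bipartite case the aperiodicity argument above applies verbatim; in the bipartite case the chain has period two, but since $t$ is odd the parity of $kt$ equals the parity of $k$, so we may pick $k$ whose parity matches that of $d_G(x,y)$ and sufficiently large for a walk of length $kt$ between $x$ and $y$ to exist. The implication ``$G$ bipartite $\Rightarrow G[t]$ bipartite'' is immediate from walk parity. For the converse I prove the contrapositive: if $G$ is non-bipartite, some vertex $x$ lies on an odd cycle of length $m$; combined with a length-two closed walk $x\to y\to x$, this yields closed walks at $x$ of every length $m+2a$ with $a\geq 0$. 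Setting $a=m(t-1)/2$, which is a nonnegative integer because $t$ is odd, produces a closed walk of length $mt$ in $G$, which corresponds to a closed walk of odd length $m$ in $G[t]$; hence $G[t]$ is not bipartite.

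The main obstacle is to make these walk-existence statements work \emph{uniformly in $t$} rather than only for $t$ sufficiently large. Connectivity is not problematic, because one is always free to multiply $t$ by a large $k$ to exceed any aperiodicity threshold. The subtler point is to exhibit an odd closed walk in $G[t]$ for \emph{every} odd $t$ when $G$ is non-bipartite; the explicit choice $a=m(t-1)/2$ in the last paragraph is precisely designed to bypass any ``sufficiently large'' hypothesis.
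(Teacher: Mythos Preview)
The paper does not actually prove this lemma; it merely states the result and cites Bauer--Jost \cite{BJ} for the proof. Your argument is correct and self-contained, so it supplies what the present paper omits: the reduction of each item to walk-parity statements in $G$ is exactly the right mechanism, the aperiodicity argument for connectivity is standard, and your explicit choice $a=m(t-1)/2$ neatly handles the one genuinely delicate point (producing an odd closed walk in $G[t]$ for \emph{every} odd $t$, not just large ones).
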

We mention the following crucial observation which can for
instance be found in \cite{BJ}:
\begin{Obs}\label{T4} The Laplace operator $\Delta$ on $G$ and
the Laplace operator $\Delta[t]$ on $G[t]$ are related to each
other by the following identity:
\[\Delta[t]= -\mathrm{id} + (\mathrm{id} + \Delta)^t.\]
\end{Obs}
An easy consequence of this observation is that the eigenvalues of
$\Delta[t]$ satisfy
\begin{equation}\label{lessthanone}0=\lambda_0[t]\leq \lambda_1[t]\leq\ldots\leq
\lambda_{N-1}[t]\leq 1\end{equation} if $t$ is even. The reason
why all eigenvalues of $G[t]$ (for $t$ even) are less or equal to
one is that every vertex in the neighborhood graph $G[t]$ has a
loop. Because of equation \eqref{lessthanone} we can assume in the
following that bounds for the eigenvalues of $\Delta[t]$, $t$
even,  are less or equal to one.

The importance of Observation \ref{T4} comes from the following
corollary that establishes a connection between estimates for the
smallest and the largest eigenvalue on $G$ and $G[t]$,
respectively.

\begin{Cor}[Bauer-Jost \cite{BJ}]\label{T3}
\begin{itemize} \item[(i)]  Let $\mathcal{A}[t]$ be a lower bound for the eigenvalue
$\lambda_1[t]$ of $\Delta[t]$, i.e., $\lambda_1[t]\geq
\mathcal{A}[t]$. Then
\begin{equation}\label{7}
1-(1-\mathcal{A}[t])^{\frac{1}{t}}\leq\lambda_1\leq\cdots\leq\lambda_{N-1}\leq
1+(1-\mathcal{A}[t])^{\frac{1}{t}}
\end{equation}
if $t$ is even and
\begin{equation}
1-(1-\mathcal{A}[t])^{\frac{1}{t}}\leq\lambda_1
\end{equation} if $t$ is odd.
\item[(ii)] Let $\mathcal{B}[t]$ be an upper bound for the largest
eigenvalue $\lambda_{N-1}[t]$ of $\Delta[t]$, i.e.
$\lambda_{N-1}[t]\leq \mathcal{B}[t]$. Then all eigenvalues of
$\Delta$ are contained in the union of the intervals
$$\left[0, 1-(1-\mathcal{B}[t])^\frac{1}{t}\right] \bigcup
\left[1+(1-\mathcal{B}[t])^\frac{1}{t}, 2\right]$$ if $t$ is even
and \
$$\lambda_{N-1} \leq 1-(1-\mathcal{B}[t])^\frac{1}{t}$$ if $t$ is odd.
\end{itemize}
\end{Cor}

These results indicate the deep connection between random walks on
graphs and eigenvalue estimate of the Laplace operator. In the
rest of this paper we will use these insights to derive lower
bounds for $\lambda_1$ and upper bounds for $\lambda_{N-1}$ in
terms of the Ollivier-Ricci curvature of a graph.

\subsection{Ollivier-Ricci curvature from a
probabilistic view}\label{ss1} We consider the usual graph metric
$d:V\times V \to \mathbb{R}^+$ on the set of vertices $V$, i.e.
for two distinct points $x, y\in V$, $d(x, y)$ is the number of
edges in the shortest path connecting $x$ and $y$. Then, including
the family of probability measures $m:=\{m_x(\cdot)\}$, we have a
structure $(V, d, m)$, on which the definition of Ricci curvature
proposed by Ollivier \cite{Oll} can be stated.

\begin{Def}[Ollivier \cite{Oll}]
For any two distinct points $x, y\in V$, the (Ollivier-) Ricci
curvature of $(V, d, m)$ along $(xy)$ is defined as
\begin{equation}\label{8}
\kappa (x, y):=1-\frac{W_1(m_x, m_y)}{d(x, y)}.
\end{equation}
\end{Def}
Here, $W_1(m_x, m_y)$ is the transportation distance between the
two probability measures $m_x$ and $m_y$, in a formula,
\begin{equation}\label{9}
W_1(m_x, m_y)=\inf_{\xi^{x,y}\in \prod(m_x, m_y)}\sum_{(x',y')\in
V\times V}d(x', y')\xi^{x,y}(x', y'),
\end{equation}
where $\prod(m_x, m_y)$ is the set of probability measures
$\xi^{x,y}$ that satisfy
\begin{equation}\label{9'} \sum_{y'\in V}\xi^{x,y}(x', y')=m_x(x'), \,\,\sum_{x'\in V}\xi^{x,y}(x', y')=m_y(y').
\end{equation}
The conditions (\ref{9'}) simply ensure that we start with the
measure $m_x$ and end up with $m_y$. Intuitively, $W_1(m_x, m_y)$
is the minimal cost to transport the mass of $m_x$ to that of
$m_y$ with the distance as the cost function. We also call such a
$\xi^{x,y}$ a transfer plan between $m_x$ and $m_y$, or a coupling
of two random walks governed by $m_x$ and $m_y$, respectively.
Those $\xi^{x,y}$ ($\xi^{x,y}$ might not be unique) which attain
the infimum value in (\ref{9}), are called optimal couplings. The
optimal coupling exists in a very general setting. For  locally
finite graphs the existence follows from a simple and interesting
argument in Remark 14.2 in \cite{LPW}.

By the Kantorovich duality formula for transportation distances
$W_1(m_x,m_y)$ is also given in the form,
\begin{equation}\label{10}
W_1(m_x, m_y)=\sup_{f:\mathrm{Lip}(f)\leq 1}\left[\sum_{x'\in
V}f(x')m_x(x')-\sum_{y'\in V}f(y')m_y(y')\right],
\end{equation}
where $\mathrm{Lip}(f):=\sup_{x\neq y}\frac{|f(x)-f(y)|}{d(x,
y)}.$ For more details about this concept, we refer to Villani
\cite{V1, V2}, and Evans \cite{Evans}.

For the rest of this paper, let $k$ be a lower bound for the
Ollivier-Ricci curvature, i.e.
\begin{equation}\label{11}
\kappa(x, y)\geq k, \,\,\forall x\sim y.
\end{equation}
The Ricci curvature satisfies the following properties (see
\cite{Oll}):
\begin{Lem}\begin{itemize}
\item[$(i)$]If $\kappa(x,y)\geq k$ for all neighbors $x\sim y$,
then this is true for all pairs of vertices $x,y\in V$.
\item[$(ii)$]We have $-2\leq \kappa(x,y)\leq 1.$
\end{itemize}
\end{Lem}
%

We will derive more precise lower and upper bounds for $\kappa$ on
a locally finite graph with loops in Theorem \ref{T6} and Theorem
\ref{T7}, respectively (see also Lin-Yau \cite{LinYau} and
Jost-Liu \cite{JL} for related results).

Combining \eqref{8} and \eqref{11} we obtain
\begin{equation}\label{12}
W_1(m_x, m_y)\leq (1-k)d(x, y)=1-k, \,\,\forall x\sim y,
\end{equation}
which is essentially equivalent to the well known path coupling
criterion on the state space of Markov chains used to study the
mixing time of them (see \cite{BD,LPW,Pe}). We will utilize this
idea to interpret the lower bound of the Ollivier-Ricci curvature
as a control on the expectation value of the distance between two
coupled random walks.

We reformulate Bubley-Dyer's theorem (see \cite{BD} or \cite{LPW},
\cite{Pe}) in our language.
\begin{Thm}[Bubley-Dyer]\label{BD} On $(V, d, m)$, if for each pair of neighbors $x, y\in V$, we have the contraction
$$W_1(m_x, m_y)\leq (1-k)d(x, y) =1-k ,$$
then for any two probability measures $\mu$ and $\nu$ on $V$, we
have
$$W_1(\mu P, \nu P)\leq (1-k)W_1(\mu, \nu).$$
\end{Thm}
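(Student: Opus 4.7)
The plan is to prove the statement in two stages. First, I upgrade the hypothesis from neighbors to arbitrary pairs of vertices, showing that $W_1(m_x, m_y)\leq (1-k)d(x,y)$ holds for all $x,y\in V$, not just those with $x\sim y$. Second, I build an explicit coupling of $\mu P$ and $\nu P$ out of an optimal coupling of $\mu$ and $\nu$ together with optimal couplings of the one-step measures, and compute its cost.

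For the first stage, I fix arbitrary $x,y\in V$, take a shortest path $x=x_0\sim x_1\sim \cdots \sim x_n = y$ of length $n=d(x,y)$, and use the triangle inequality for $W_1$ together with the hypothesis:
\begin{equation*}
W_1(m_x, m_y) \leq \sum_{i=0}^{n-1} W_1(m_{x_i}, m_{x_{i+1}}) \leq (1-k)\, n = (1-k)\, d(x,y).
\end{equation*}
The triangle inequality itself is standard: gluing optimal couplings of the consecutive pairs along their common marginal produces a coupling of $m_x$ and $m_y$ whose cost is bounded by the sum of the piecewise costs.

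For the second stage, let $\pi\in\prod(\mu,\nu)$ be an optimal coupling realizing $W_1(\mu,\nu)$, and for each pair $(x,y)$ with $\pi(x,y)>0$ let $\xi^{x,y}\in\prod(m_x, m_y)$ be an optimal coupling realizing $W_1(m_x, m_y)$. Define a measure on $V\times V$ by
\begin{equation*}
\Xi(x', y') := \sum_{x,y\in V} \pi(x,y)\,\xi^{x,y}(x', y').
\end{equation*}
The marginal check using (\ref{9'}) gives $\sum_{y'}\Xi(x',y') = \sum_{x,y}\pi(x,y)m_x(x') = \sum_x\mu(x)m_x(x') = \mu P(x')$, and symmetrically $\sum_{x'}\Xi(x',y') = \nu P(y')$, so $\Xi\in\prod(\mu P, \nu P)$. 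Consequently,
\begin{equation*}
W_1(\mu P,\nu P) \leq \sum_{x',y'} d(x',y')\,\Xi(x',y') = \sum_{x,y} \pi(x,y)\, W_1(m_x, m_y) \leq (1-k)\sum_{x,y}\pi(x,y)\, d(x,y) = (1-k)\, W_1(\mu,\nu),
\end{equation*}
where the last inequality applies the strengthened contraction from the first stage.

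The potential pitfall is largely bookkeeping: one must be careful that the contraction hypothesis is only assumed at neighbors, so the first stage (geodesic-chaining) is essential before one can bound the term $W_1(m_x,m_y)$ by $(1-k)d(x,y)$ inside the sum. A minor point is that the gluing lemma underlying the triangle inequality for $W_1$ is completely standard on a finite space, since every optimal coupling exists and can be realized as a finite nonnegative matrix with prescribed marginals; no measure-theoretic subtlety arises here.
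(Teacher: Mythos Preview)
Your proof is correct and is essentially the standard path-coupling argument. Note, however, that the paper does not actually supply its own proof of this theorem: it is quoted as Bubley--Dyer's result with references to \cite{BD}, \cite{LPW}, \cite{Pe}, and then used as a black box (in particular to derive Corollary~\ref{C1}). So there is nothing in the paper to compare against beyond the observation that your two-stage argument (geodesic chaining to pass from neighbors to arbitrary pairs, then the mixture coupling $\Xi=\sum_{x,y}\pi(x,y)\xi^{x,y}$) is precisely the proof one finds in those references.
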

With this at hand, it is easy to see that if for any pair of
neighbors $x, y$, $\kappa(x, y)\geq k$, then for any time $t$ and
any two $\bar{x}, \bar{y}$, which are not necessarily neighbors,
the following is true,
\begin{equation}\label{14}
W_1(\delta_{\bar{x}} P^t, \delta_{\bar{y}} P^t)\leq
(1-k)^td(\bar{x}, \bar{y}).
\end{equation}
We consider two coupled discrete time random walks $(\bar{X}_t,
\bar{Y}_t)$, whose distributions are $\delta_{\bar{x}} P^t$,
$\delta_{\bar{y}} P^t$ respectively. They are coupled in a way
that the probability
$$p(\bar{X}_t=\bar{x}', \bar{Y}_t=\bar{y}')=\xi_t^{\bar{x}, \bar{y}}(\bar{x}', \bar{y}'),$$
where $\xi_t^{\bar{x}, \bar{y}}(\cdot, \cdot)$ is the optimal
coupling of $\delta_{\bar{x}} P^t$ and $\delta_{\bar{y}} P^t$. In
this language, we can interpret the term $W_1(\delta_{\bar{x}}
P^t, \delta_{\bar{y}} P^t)$ as the expectation value of the
distance ${\bf E}^{\bar{x}, \bar{y}}d(\bar{X}_t, \bar{Y}_t)$
between the coupled random walks $\bar{X}_t$ and $\bar{Y}_t$.
\begin{Cor}\label{C1}
On $(V, d, m)$, if $\kappa(x, y)\geq k, \,\,\forall x\sim y$, then
we have for any two $\bar{x}, \bar{y}\in V$,
\begin{equation}\label{15}
{\bf E}^{\bar{x}, \bar{y}}d(\bar{X}_t,
\bar{Y}_t)=W_1(\delta_{\bar{x}} P^t, \delta_{\bar{y}} P^t)\leq
(1-k)^td(\bar{x}, \bar{y}).
\end{equation}
\end{Cor}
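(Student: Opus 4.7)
The plan is to derive the corollary directly from Theorem \ref{BD} by iteration, and then to identify the resulting Wasserstein distance with the expectation using the definition of the optimal coupling $\xi_t^{\bar x, \bar y}$. The statement has two parts: the contraction bound $W_1(\delta_{\bar{x}} P^t, \delta_{\bar{y}} P^t)\leq (1-k)^t d(\bar{x},\bar{y})$ (which is precisely (\ref{14})) and the equality ${\bf E}^{\bar{x}, \bar{y}}d(\bar{X}_t, \bar{Y}_t)=W_1(\delta_{\bar{x}} P^t, \delta_{\bar{y}} P^t)$. I would prove them independently and then combine.

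For the contraction, I would argue by induction on $t$. The base case $t=0$ reduces to the identity $W_1(\delta_{\bar x}, \delta_{\bar y}) = d(\bar x, \bar y)$, immediate from (\ref{9}) since the only coupling of two Dirac masses is the one concentrated at $(\bar x, \bar y)$. For the inductive step, assuming the bound at time $t-1$, I would apply Theorem \ref{BD} with $\mu = \delta_{\bar x} P^{t-1}$ and $\nu = \delta_{\bar y} P^{t-1}$ to get
\[
W_1(\delta_{\bar x} P^t, \delta_{\bar y} P^t) \leq (1-k)\, W_1(\delta_{\bar x} P^{t-1}, \delta_{\bar y} P^{t-1}) \leq (1-k)^t d(\bar x, \bar y),
\]
which closes the induction and yields (\ref{14}).

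For the equality, by the construction of the coupled walks $(\bar X_t, \bar Y_t)$ the joint law is $p(\bar X_t = \bar x', \bar Y_t = \bar y') = \xi_t^{\bar x, \bar y}(\bar x', \bar y')$; since $\xi_t^{\bar x, \bar y}$ is chosen to attain the infimum in (\ref{9}) for the measures $\delta_{\bar x} P^t$ and $\delta_{\bar y} P^t$, one reads off directly
\[
{\bf E}^{\bar x, \bar y} d(\bar X_t, \bar Y_t) = \sum_{\bar x' \in V} \sum_{\bar y' \in V} d(\bar x', \bar y')\, \xi_t^{\bar x, \bar y}(\bar x', \bar y') = W_1(\delta_{\bar x} P^t, \delta_{\bar y} P^t),
\]
and combining with the inductive bound yields the corollary.

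I do not expect any real obstacle: the substantive analytic content has already been absorbed into Theorem \ref{BD} (via Bubley--Dyer's path-coupling argument), and the identification of the expectation with $W_1$ is tautological once the joint distribution of $(\bar X_t, \bar Y_t)$ is taken to be an optimal coupling. The only point worth verifying is that the iteration is legitimate — and it is, because Theorem \ref{BD} is stated for arbitrary probability measures on $V$, so in particular it may be applied to the propagated measures $\delta_{\bar x} P^{t-1}$ and $\delta_{\bar y} P^{t-1}$ at each step.
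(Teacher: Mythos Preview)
Your proposal is correct and follows the same route as the paper: the text immediately preceding the corollary derives (\ref{14}) by iterating Theorem~\ref{BD} (``With this at hand, it is easy to see\ldots''), and then defines the joint law of $(\bar X_t,\bar Y_t)$ to be an optimal coupling so that the expectation equals $W_1$ by definition. You have simply written out the induction and the identification with the expectation more explicitly than the paper does.
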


\section{Estimates for Ollivier-Ricci curvature on locally finite graphs with loops}\label{Section3}
In \cite{JL} Jost-Liu obtained a sharp estimate for Ollivier-Ricci
curvature on locally finite graphs without loops. As mentioned
above, neighborhood graphs are in general weighted graphs with
loops. Therefore, for our purposes, we need to understand the
curvature of graphs with loops.  In this section, we generalize
the estimates in Jost-Liu \cite{JL} for locally finite graphs
$\tilde{G}=(\tilde{V}, \tilde{E})$ that may have loops. This is
done by considering a novel optimal transportation plan.

We first fix some notations. For any two real numbers $a, b$,
$$a_+:=\max\{a, 0\}, \,\,a\wedge b:=\min\{a, b\}, \,\text{and} \,\,a\vee b:=\max\{a, b\}.$$
We denote $\tilde{N}_x:=\{z\in \tilde{V}|z\sim x\}$ as the
neighborhood of $x$ and $N_x:=\tilde{N}_x\cup\{x\}$. Then
$N_x=\tilde{N}_x$ if $x$ has a loop. For every pair of neighbors
$x, y$, we divide $N_x, N_y$ into disjoint parts as follows.
\begin{equation} \label{Eq3}N_x=\{x\}\cup \{y\}\cup N_x^1\cup N_{xy},
\,\,N_y=\{y\}\cup\{x\}\cup N_y^1\cup N_{xy},
\end{equation} where $$N_{xy}=N_{x\geq y}\cup N_{x<y}$$ and
\begin{align*}
&N_x^1:=\{z|z\sim x, z\not\sim y,  z\neq y\},\,  \\
&N_{x\geq y}:=\{z|z\sim x, z\sim y, z\neq x, z\neq y,
\frac{w_{xz}}{d_x}\geq \frac{w_{zy}}{d_y}\},\\& N_{x<y}:=\{z|z\sim
x, z\sim y, z\neq x, z\neq y,
\frac{w_{xz}}{d_x}<\frac{w_{zy}}{d_y}\}.
\end{align*}In the next figure we illustrate this partition of the vertex set.
\begin{center}
\includegraphics[width = 8 cm]{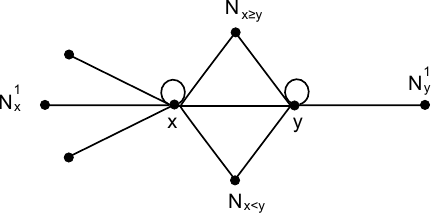}
\end{center}

\begin{Thm}\label{T6}
On $\tilde{G}=(\tilde{V}, \tilde{E})$, we have for any pair of
neighbors $x, y\in \tilde{V}$,
\begin{align*}
\kappa(x, y)\geq k(x, y):=&-\left(1-\frac{w_{xy}}{d_x}-\frac{w_{xy}}{d_y}-\sum_{x_1\in N_{xy}}\frac{w_{x_1x}}{d_x}\vee\frac{w_{x_1y}}{d_y}\right)_+\\
&-\left(1-\frac{w_{xy}}{d_x}-\frac{w_{xy}}{d_y}-\sum_{x_1\in
N_{xy}}\frac{w_{x_1x}}{d_x}\wedge\frac{w_{x_1y}}{d_y}\right)_+
\\&+\sum_{x_1\in N_{xy}}\frac{w_{x_1x}}{d_x}\wedge\frac{w_{x_1y}}{d_y}+\frac{w_{xx}}{d_x}+\frac{w_{yy}}{d_y}.
\end{align*}
Moreover, this inequality is sharp.
\end{Thm}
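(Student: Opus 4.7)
The plan is to prove this by constructing an explicit transport plan $\xi \in \prod(m_x, m_y)$, bounding its total cost, and then applying the definition of Ollivier-Ricci curvature. Since $x \sim y$ gives $d(x, y) = 1$, it suffices to show
\begin{equation*}
W_1(m_x, m_y) \leq 1 + \left(A\right)_+ + \left(B\right)_+ - C - \frac{w_{xx}}{d_x} - \frac{w_{yy}}{d_y},
\end{equation*}
where I abbreviate by $A$ and $B$ the two expressions inside the $(\cdot)_+$ in the statement and $C := \sum_{z \in N_{xy}} \frac{w_{xz}}{d_x} \wedge \frac{w_{yz}}{d_y}$; the displayed right-hand side equals $1 - k(x, y)$.

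The construction of $\xi$ follows the partition $N_x = \{x\} \cup \{y\} \cup N_x^1 \cup N_{xy}$ and its analog for $N_y$, extending the loop-free construction of Jost-Liu \cite{JL}. Mass is moved in four stages: (i) at each common neighbor $z \in N_{xy}$, match mass $\frac{w_{xz}}{d_x} \wedge \frac{w_{yz}}{d_y}$ at $z$, contributing $C$ at cost zero; (ii) at $\{x, y\}$, match the $m_x$-loop mass at $x$ with as much $m_y$-mass at $x$ as possible and route any overflow across the edge $xy$ at cost one per unit, symmetrically for the $m_y$-loop at $y$ --- the loops are what allow $\frac{w_{xx}}{d_x}$ and $\frac{w_{yy}}{d_y}$ to appear in full rather than through minima, because unmatched loop mass can first be ``reflected'' at its own vertex before being relayed; (iii) redirect the residual excess at common neighbors across the edge $xy$ at cost one per unit, using the split $N_{xy} = N_{x \geq y} \cup N_{x < y}$ to decide the direction of flow; and (iv) route mass on $N_x^1$ and $N_y^1$ through $\{x, y\}$, coordinated with (iii) so that each unit travels the minimum necessary distance.

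Summing the contributions and using the normalizations $\sum_{u \in N_x} m_x(u) = 1 = \sum_{v \in N_y} m_y(v)$, the total cost simplifies to the claimed upper bound on $W_1(m_x, m_y)$. The two summands $\left(A\right)_+$ and $\left(B\right)_+$ reflect two complementary routing constraints across the common-neighbor bottleneck: $B$ measures the net mass that must traverse the edge $xy$ after the local matching at common neighbors, while $A$ captures the extra cost when the common-neighbor channel is oversubscribed from both sides. Taking positive parts is essential: when $A$ or $B$ is negative, the corresponding constraint is slack and contributes nothing to the optimal cost.

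The main obstacle is the case analysis. The signs of $A$, $B$, $\frac{w_{xx}}{d_x} - \frac{w_{xy}}{d_y}$, and $\frac{w_{yy}}{d_y} - \frac{w_{xy}}{d_x}$ give rise to several regimes, and the transport plan must be adapted in each to remain admissible (i.e., to have marginals $m_x$ and $m_y$) without inflating the total cost beyond the claimed bound. Once the plan is in hand, sharpness can be certified by constructing a small explicit graph --- for instance, a two-vertex graph with appropriately weighted loops, or a regular graph with a prescribed number of common neighbors --- on which the plan achieves equality; Kantorovich duality then supplies a matching $1$-Lipschitz function proving optimality.
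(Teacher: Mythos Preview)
Your proposal takes essentially the same route as the paper: build an explicit transport plan on the partition $N_x=\{x\}\cup\{y\}\cup N_x^1\cup N_{xy}$, split into cases by the signs of $A$ and $B$, and certify sharpness via Kantorovich duality with explicit $1$-Lipschitz witnesses. The one refinement worth borrowing is that the paper does \emph{not} write down the coupling $\xi$ entry by entry---it remarks that this ``would be too complicated'' once loops are present---but instead moves mass in discrete time steps and bounds $W_1(m_x,m_y)$ by the triangle inequality through intermediate measures $m^1,m^2,\ldots$; this collapses your additional sub-cases on $\tfrac{w_{xx}}{d_x}-\tfrac{w_{xy}}{d_y}$ and $\tfrac{w_{yy}}{d_y}-\tfrac{w_{xy}}{d_x}$ into just the three regimes $0\le A\le B$, $A<0\le B$, $A\le B<0$.
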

\begin{Rmk}\label{Re2}
On an unweighted graph, the lower bound for the Ricci curvature
$k(x,y)$ for $x\sim y$ becomes
\begin{align*}k(x, y)=&-\left(1-\frac{1}{d_x}-\frac{1}{d_y}-\frac{\sharp(x, y)}{d_x\wedge d_y}\right)_+
-\left(1-\frac{1}{d_x}-\frac{1}{d_y}-\frac{\sharp(x, y)}{d_x\vee
d_y}\right)_+\\&+\frac{\sharp(x, y)}{d_x\vee
d_y}+\frac{c(x)}{d_x}+\frac{c(y)}{d_y},\end{align*} where
$\sharp(x, y):=\sum_{x_1\in N_{xy}}1$ is the number of triangles
containing $x,y$, $c(x)=0$ or $1$ is the number of loops at $x$.
\end{Rmk}
\begin{proof} Since the total  mass of $m_x$ is equal to one, we
obtain from (\ref{Eq3}) the following identity for neighboring
vertices $x$ and $y$: \begin{equation} \label{Eq4}1 -
\frac{w_{xy}}{d_x} - \sum_{x_1\in N_{xy}} \frac{w_{x_1x}}{d_x} =
\frac{w_{xx}}{d_x} +\sum_{x_1\in
N_x^1}\frac{w_{x_1x}}{d_x}\end{equation} A similar identity holds
for $y$.

We denote
\begin{align*}
A_{x, y}&:=1-\frac{w_{xy}}{d_x}-\frac{w_{xy}}{d_y}-\sum_{x_1\in N_{xy}}\frac{w_{x_1x}}{d_x}\vee\frac{w_{x_1y}}{d_y}, \\
B_{x, y}&:=1-\frac{w_{xy}}{d_x}-\frac{w_{xy}}{d_y}-\sum_{x_1\in
N_{xy}}\frac{w_{x_1x}}{d_x}\wedge\frac{w_{x_1y}}{d_y}.
\end{align*}
Obviously, $A_{x, y}\leq B_{x, y}$. We firstly try to understand
these two quantities.

If $A_{x, y}\geq 0$, we have
\begin{equation}\label{31}
1-\frac{w_{xy}}{d_y}-\sum_{x_1\in
N_{xy}}\frac{w_{x_1y}}{d_y}\geq\frac{w_{xy}}{d_x}+\sum_{x_1\in
N_{x\geq
y}}\left(\frac{w_{xx_1}}{d_x}-\frac{w_{x_1y}}{d_y}\right),
\end{equation}
i.e., using (\ref{Eq4}) we observe that the mass of $m_y$ at $y$
and $N_y^1$ is no smaller than that of $m_x$ at $y$ and the excess
mass at $N_{x\geq y}$. Rewriting (\ref{31}) in the form
\begin{equation*}
\frac{w_{xy}}{d_y}+\sum_{x_1\in N_{xy}}\frac{w_{x_1y}}{d_y}\leq
1-\frac{w_{xy}}{d_x}-\sum_{x_1\in N_{x\geq
y}}\left(\frac{w_{xx_1}}{d_x}-\frac{w_{x_1y}}{d_y}\right),
\end{equation*}and
subtracting the term $\sum_{x_1\in
N_{xy}}\frac{w_{xx_1}}{d_x}\wedge\frac{w_{x_1y}}{d_y}$ on both
sides we obtain
\begin{equation}\label{32}
\frac{w_{xy}}{d_y}+\sum_{x_1\in
N_{x<y}}\left(\frac{w_{x_1y}}{d_y}-\frac{w_{xx_1}}{d_x}\right)\leq1-\frac{w_{xy}}{d_x}-\sum_{x_1\in
N_{xy}}\frac{w_{xx_1}}{d_x},
\end{equation}
i.e., the mass of $m_x$ at $x$ and $N_x^1$ is larger than that of
$m_y$ at $x$ and the excess mass at $N_{x<y}$.

If $B_{x, y}\geq 0$, we have
\begin{equation}\label{33}
1-\frac{w_{xy}}{d_x}-\sum_{x_1\in
N_{xy}}\frac{w_{xx_1}}{d_x}+\sum_{x_1\in N_{x\geq
y}}\left(\frac{w_{xx_1}}{d_x}-\frac{w_{x_1y}}{d_y}\right)\geq\frac{w_{xy}}{d_y},
\end{equation}
i.e., the mass of $m_x$ at $x$ and $N_x^1$ and the excess mass at
$N_{x\geq y}$ is no smaller than that of $m_y$ at $x$.

In Jost-Liu \cite{JL} it is explicitly described how much mass has
to be moved from a vertex in $N_x$ to which point in $N_y$, i.e.
the exact value of $\xi^{x, y}(x', y')$, for any $x'\in N_x$,
$y'\in N_y$. But in the case with loops it would be too
complicated if we try to do the same thing. Instead, we adopt here
a dynamic strategy. That is, we think of a discrete time flow of
mass. After one unit time, the mass flows forward for distance $1$
or stays there. We only need to determine the direction of the
flow according to different cases.

As in Jost-Liu \cite{JL}, we divide the discussion into 3 cases.
\begin{itemize}
\item $0\leq A_{x, y}\leq B_{x, y}.$ In this case we use the
following transport plan: Suppose the initial time is $t=0$.
\begin{description}
\item[$t=1$] Move all the mass at $N_x^1$ to $x$ and the excess
mass at $N_{x\geq y}$ to $y$. We denote the distribution of the
mass after the first time step by $m^1$. We have
$$W_1(m_x,m^1)\leq \left(1-\frac{w_{xx}}{d_x}-\frac{w_{xy}}{d_x}-\sum_{x_1\in N_{xy}}
\frac{w_{xx_1}}{d_x}\right)\times 1+\sum_{x_1\in N_{x\geq
y}}\left(\frac{w_{xx_1}}{d_x}-\frac{w_{x_1y}}{d_y}\right)\times 1
$$ \item[$t=2$] Move one part of the excess mass at $x$ now to
fill the gap at $N_{x<y}$ and the other part to $y$. By (\ref{32})
the mass at $x$ after $t=1$ is enough to do so. The distribution
of the mass is now denoted by $m^2$. We have \begin{align*}
W_1(m^1,m^2)\leq & \sum_{x_1\in
N_{x<y}}\left(\frac{w_{x_1y}}{d_y}-\frac{w_{xx_1}}{d_x}\right)\times
1+ \\&\left[\left(1-\frac{w_{xy}}{d_x}-\sum_{x_1\in
N_{xy}}\frac{w_{xx_1}}{d_x}\right)-\sum_{x_1\in
N_{x<y}}\left(\frac{w_{x_1y}}{d_y}-\frac{w_{xx_1}}{d_x}\right)-\frac{w_{xy}}{d_y}\right]\times
1\end{align*} \item[$t=3$] Move the excess mass at $y$ now to
$N_y^1$. We denote the mass after the third time step by
$m^3=m_y$. We have
\begin{align*}W_1(m^2,m_y)\leq &\Bigg[\left(1-\frac{w_{xy}}{d_x}-
\sum_{x_1\in N_{xy}}\frac{w_{xx_1}}{d_x}\right)-\sum_{x_1\in
N_{x<y}}
\left(\frac{w_{x_1y}}{d_y}-\frac{w_{xx_1}}{d_x}\right)-\frac{w_{xy}}{d_y}\\
&+\frac{w_{xy}}{d_x}+\sum_{x_1\in N_{x\geq
y}}\left(\frac{w_{xx_1}}{d_x}-\frac{w_{x_1y}}{d_y}\right)-\frac{w_{yy}}{d_y}\Bigg]\times
1\end{align*}
\end{description}
By triangle inequality and (\ref{9}), we get
\begin{align*}
W_1(m_x, m_y)\leq& W_1(m_x, m^1)+W_1(m^1, m^2)+W_1(m^2, m_y)
\\
=&3-2\frac{w_{xy}}{d_x}-2\frac{w_{xy}}{d_y}-2\sum_{x_1\in
N_{xy}}\frac{w_{xx_1}}{d_x}\wedge\frac{w_{x_1y}}{d_y}-\sum_{x_1\in
N_{xy}}\frac{w_{xx_1}}{d_x}\vee\frac{w_{x_1y}}{d_y}\\&-\frac{w_{xx}}{d_x}-\frac{w_{yy}}{d_y}.
\end{align*}
Moreover, if the following function can be extended as a function
on the graph such that $\mathrm{Lip}(f)\leq 1$, (i.e., if there
are no paths of length 1 between $N_x^1$ and $N_{x<y}$, nor paths
of length 1 between $N_y^1$ and $N_{x\geq y}$, nor paths of length
$1$ or $2$ between $N_x^1$ and $N_y^1$,)
\begin{equation*}
 f(z)=\left\{
        \begin{array}{ll}
          0, & \hbox{if $z\in N_y^1$;} \\
          1, & \hbox{if $z\in\{y\}\cup N_{x<y}$;} \\
          2, & \hbox{if $z\in\{x\}\cup N_{x\geq y}$;} \\
          3, & \hbox{if $z\in N_x^1$,}
        \end{array}
      \right.
\end{equation*}
then by Kantorovich duality (\ref{10}), we can show that the
inequality above is actually an equality. Recalling the definition
of $\kappa(x, y)$, we have proved the theorem in this case.
  \item$A_{x, y}<0\leq B_{x, y}.$ We use the following transfer plan:
  \begin{description}
    \item[$t=1$] We divide the excess mass of $m_x$ at $N_{x\geq y}$
into two parts. One part together with the mass of $m_x$ at $y$ is
enough to fill gaps at $y$ and $N_y^1$. Since (\ref{31}) doesn't
hold in this case, this is possible. We move this part of mass to
$y$ and the other part to $x$. We also move all the mass of $m_x$
at $N_{x}^1$ to x.
    \item[$t=2$] We move the excess mass at $x$ now to $N_{x<y}$ and the excess mass at $y$ to $N_{y}^1$.
  \end{description}
Applying this transfer plan, we can prove (we omit the calculation
here) $$W_1(m_x, m_y)\leq
2-\frac{w_{xy}}{d_x}-\frac{w_{xy}}{d_y}-2\sum_{x_1\in
N_{xy}}\left(\frac{w_{xx_1}}{d_x}\wedge\frac{w_{x_1y}}{d_y}\right)-\frac{w_{xx}}{d_x}-\frac{w_{yy}}{d_y}.$$
Moreover, if the following function can be extended as a function
on the graph such that $\mathrm{Lip}(f)\leq 1$, (i.e., if there
are no paths of length 1 between $N_x^1\cup N_{x\geq y}$ and
$N_y^1\cup N_{x<y}$,)
\begin{equation*}
 f(z)=\left\{
        \begin{array}{ll}
          0, & \hbox{if $z\in N_y^1\cup N_{x<y}$;} \\
          1, & \hbox{if $z=x$ or $z=y$;} \\
          2, & \hbox{if $z\in N_{x}^1\cup N_{x\geq y}$,} \\
        \end{array}
      \right.
\end{equation*}
then by Kantorovich duality (\ref{10}), we can check that the
inequality above is actually an equality.
  \item $A_{x, y}\leq B_{x, y}<0.$ We use the following transport
  plan:
  \begin{description}
    \item[$t=1$] Move the mass of $m_x$ at $N_x^1$ and $N_{x\geq y}$ to $x$. Since now (\ref{33}) doesn't hold, we need to move one part of the mass $m_y(y)$ to $x$ and the other part to $N_y^1$ and $N_{x<y}$.
  \end{description}
  Applying this transfer plan, we can calculate
  $$W_1(m_x, m_y)\leq 1-\sum_{x_1\in N_{xy}}\left(\frac{w_{xx_1}}{d_x}\wedge\frac{w_{x_1y}}{d_y}\right)-\frac{w_{xx}}{d_x}-\frac{w_{yy}}{d_y}.$$
  Since the following function can be extended as a function on the graph such that $\mathrm{Lip}(f)\leq 1$,
  \begin{equation*}
 f(z)=\left\{
        \begin{array}{ll}
          0, & \hbox{if $z\in\{x\}\cup N_{x<y}\cup N_y^1$;} \\
          1, & \hbox{if $z\in\{y\}\cup N_{x\geq y} \cup N_x^1$,} \\
        \end{array}
      \right.
\end{equation*}
we can check the inequality above is in fact an equality by
Kantorovich duality. That is, in this case for any $x\sim y$,
$$\kappa(x, y)=\sum_{x_1\in N_{xy}}\left(\frac{w_{xx_1}}{d_x}\wedge\frac{w_{x_1y}}{d_y}\right)+\frac{w_{xx}}{d_x}+\frac{w_{yy}}{d_y}.$$
\end{itemize}\end{proof}

We also have a generalization of the upper bound in Jost-Liu
\cite{JL} on $\tilde{G}$.
\begin{Thm}\label{T7}
On $\tilde{G}=(\tilde{V}, \tilde{E})$, we have for every pair of
neighbors $x, y$,
\begin{equation*}
 \kappa(x, y)\leq \sum_{x_1\in\{x\}\cup\{y\}\cup N_{xy}}\frac{w_{x_1x}}{d_x}\wedge\frac{w_{x_1y}}{d_y}.
\end{equation*}
\end{Thm}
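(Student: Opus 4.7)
The plan is to reduce the upper bound on $\kappa(x,y)$ to a lower bound on $W_1(m_x,m_y)$, and then to exploit the elementary fact that in any transfer plan the mass ``staying put'' at a vertex $z$ is bounded by $m_x(z)\wedge m_y(z)$. Since $x\sim y$ gives $d(x,y)=1$, the desired inequality is equivalent to
\begin{equation*}
W_1(m_x,m_y)\;\geq\;1-\sum_{x_1\in\{x\}\cup\{y\}\cup N_{xy}}\frac{w_{x_1x}}{d_x}\wedge\frac{w_{x_1y}}{d_y}.
\end{equation*}

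First I would fix an arbitrary coupling $\xi\in\prod(m_x,m_y)$ and observe that the diagonal entries satisfy $\xi(z,z)\leq m_x(z)\wedge m_y(z)$, since the two marginal constraints in (\ref{9'}) force $\xi(z,z)\leq\sum_{y'}\xi(z,y')=m_x(z)$ and $\xi(z,z)\leq\sum_{x'}\xi(x',z)=m_y(z)$ simultaneously. Because $d$ is the shortest-path metric, $d(x',y')\geq 1$ whenever $x'\neq y'$, so
\begin{equation*}
\sum_{x',y'}d(x',y')\,\xi(x',y')\;\geq\;\sum_{x'\neq y'}\xi(x',y')\;=\;1-\sum_{z}\xi(z,z)\;\geq\;1-\sum_{z\in \tilde{V}}m_x(z)\wedge m_y(z).
\end{equation*}
Taking the infimum over $\xi$ yields the displayed lower bound for $W_1(m_x,m_y)$ once we observe that $m_x(z)\wedge m_y(z)$ can be nonzero only when $z\in N_x\cap N_y$; by the decomposition (\ref{Eq3}) this intersection is precisely $\{x\}\cup\{y\}\cup N_{xy}$, and on this set $m_x(z)\wedge m_y(z)=\frac{w_{zx}}{d_x}\wedge\frac{w_{zy}}{d_y}$.

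Substituting back into the definition $\kappa(x,y)=1-W_1(m_x,m_y)$ finishes the proof. There is no real obstacle here: unlike Theorem \ref{T6}, whose lower bound required constructing a careful three-stage transfer plan that handles loops and the partition of $N_{xy}$ into $N_{x\geq y}$ and $N_{x<y}$, the upper bound depends only on the universal ``overlap mass'' inequality $\xi(z,z)\leq m_x(z)\wedge m_y(z)$ together with the fact that distinct vertices are at distance at least one. Alternatively, essentially the same estimate can be produced via Kantorovich duality (\ref{10}) by testing against the $1$-Lipschitz indicator $f(z)=\mathbf{1}_{z\notin\{x\}\cup\{y\}\cup N_{xy}}$, but the coupling argument above is the shortest route.
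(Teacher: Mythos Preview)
Your coupling argument is correct and is exactly the paper's proof, only spelled out more carefully: the paper observes in one sentence that $I:=\sum_{x_1\in\{x\}\cup\{y\}\cup N_{xy}}\frac{w_{x_1x}}{d_x}\wedge\frac{w_{x_1y}}{d_y}$ is ``the mass of $m_x$ which we need not move'' and that the remaining mass must travel distance at least $1$, so $W_1(m_x,m_y)\geq 1-I$. Your formalization via $\xi(z,z)\leq m_x(z)\wedge m_y(z)$ and $d(x',y')\geq 1$ off the diagonal is precisely that observation made rigorous.

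One small correction to your closing aside: the dual test function $f=\mathbf{1}_{z\notin\{x\}\cup\{y\}\cup N_{xy}}$ does \emph{not} produce the same estimate. Plugging it into (\ref{10}) gives only $W_1(m_x,m_y)\geq m_x(N_x^1)-m_y(N_y^1)$, which is in general different from $1-I$ and can even be negative. What your coupling argument actually recovers is the total-variation lower bound $W_1\geq 1-\sum_z m_x(z)\wedge m_y(z)$; to obtain that via duality one should test against $f=\mathbf{1}_{\{z:\,m_x(z)>m_y(z)\}}$ rather than the indicator of the complement of the common support.
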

\begin{proof} $I:=\sum_{x_1\in\{x\}\cup\{y\}\cup
N_{xy}}\frac{w_{x_1x}}{d_x}\wedge\frac{w_{x_1y}}{d_y}$ is exactly
the mass of $m_x$ which we need not move. The other mass need to
be moved for at least distance 1. So we have $W_1(m_x, m_y)\geq
1-I$, which implies $\kappa(x, y)\leq I$, for $x\sim y$.
\end{proof}

\begin{Ex}\label{Ex2}
We consider a lazy random walk on an unweighted complete graph
$\mathcal{K}_N$ with $N$ vertices governed by $m_x(y)=1/N, \forall
x, y$. Or equivalently , we consider the graph
$\mathcal{K}^{\mathrm{lazy}}_N$. Using Theorem \ref{T6} and
Theorem \ref{T7}, we get for any $x, y$
$$1=\frac{N-2}{N}+\frac{1}{N}+\frac{1}{N}\leq \kappa(x, y)\leq \frac{1}{N}\cdot N=1.$$
That is, in this case, both the lower and the upper bound are
sharp.
\end{Ex}

An immediate consequence of Theorem \ref{T7} is the following
important observation.
\begin{Cor}\label{Re1}
If there exists two vertices $x\sim y$ in $G$ such that $\sharp(x,
y)= c(x)=c(y)= 0$ then $\kappa(x,y)\leq 0$ and hence $k\leq 0$.
\end{Cor}

This corollary shows that positive Ricci curvature is a quite
strong requirement. For instance, in a loopless graph, already the
existence of a single edge that is not contained in a triangle
prevents the graph from having a positive Ricci curvature lower
bound. We will show in the following that the neighborhood graph
technique can be used to reduce the influence of such edges. This
observation is particularly important in the next section when we
study eigenvalue estimates in terms of the Ricci curvature.

Neighborhood graphs are nothing but coarse representations of the
original graph.  More precisely, the neighborhood graphs $G[t]$
encode the larger scale structure of the original graph $G$, where
larger values of $t$ stand for larger scales, in the sense that an
edge between two nodes in $G[t]$ is equivalent to the existence of
a path of length $t$ in the original graph $G$ between these two
nodes. In order to see how neighborhood graphs can reduce the
influence of single edges, we state the following simple
observations that follow immediately from the definition of the
neighborhood graphs.

\begin{Obs}\label{Obs}
\begin{itemize}
\item Triangles and loops are preserved when we go to higher order
neighborhood graphs, i.e. if $(xyz)$ form a triangle in $G[s]$
($x$ has a loop in $G[s]$) then they from a triangle in $G[t]$
($x$ has a loop in $G[t]$) for all $t>s$. \item If $t$ is even,
every vertex has a loop in $G[t]$. \item If $t$ is odd, the edge
set of $G$ is a subset of the edge set of $G[t]$, i.e. $E\subseteq
E[t]$.\item If in $G$ a vertex $x$ is not contained in a triangle
but contained in a cycle of length $3t$ then $x$ is contained in a
triangle in $G[t]$. \item If in $G$ a vertex $x$ is contained in a
cycle of odd length $2l+1$, $l\geq 1$, then $x$ is contained in a
triangle (in fact in a complete graph $\mathcal{K}_{2l+1}$) in
$G[t]$ if $t\geq 2l-1$. \item If in $G$ a vertex  $x$ is not
contained in a triangle but $x\sim y$ where $y$ is contained in a
triangle, then $x$ is also contained in a triangle in $G[t]$ for
all $t\geq 2$.
\end{itemize}
\end{Obs}
These observations show, unless $G$ is bipartite, that the number
of triangles and loops will monotonically increase when we go from
$G$ to $G[t]$. Hence even though the Ricci curvature of the
original graph is negative, Corollary \ref{Re1} does not exclude
that the Ricci curvature of the neighborhood graph $G[t]$ is
positive. Indeed we will show in Theorem \ref{T10} that for all
graphs that are not bipartite there exists a $t'\in \mathbb{N}$
such that the Ricci curvature of the neighborhood graph $G[t]$
satisfies $k[t]:=\min_{x,y}\kappa[t](x,y)>0$ for all $t>t'$.

\section{Estimates of the spectrum in terms of Ollivier-Ricci curvature}\label{S3}
In this section, we obtain nontrivial estimates for the extremal
eigenvalues of the normalized Laplace operator in terms of the
Olliver-Ricci curvature of the neighborhood graphs. In particular,
our new estimates improve the eigenvalue estimates obtained by
Olliver in \cite{Oll}.

In Proposition 30 of \cite{Oll}, Ollivier proved a spectral radius
estimate which works on a general metric space with random walks.
In particular, on finite graphs, it can be stated as follows.
\begin{Thm}[Ollivier] \label{T1}
On $(V, d, m)$, if $\kappa(x, y)\geq k, \,\,\forall x\sim y$, then
the eigenvalues of the normalized graph Laplace operator $\Delta$
satisfy
$$k \leq \lambda_1\leq \ldots\leq \lambda_{N-1} \leq 2- k.$$
\end{Thm}
The lower bound for $\lambda_1$ is a discrete analogue of the
estimate for the smallest nonzero eigenvalue of the
Laplace-Beltrami operator on a Riemannian manifold by Lichnerowicz
\cite{Lich}. As pointed out by Ollivier \cite{Oll}, this result is
also related to the coupling method for estimates of the first
eigenvalue in the Riemannian setting developed by Chen-Wang
\cite{CW} (which leads to a refinement of the eigenvalue estimate
of Li-Yau \cite{LiYau}), see also the surveys Chen \cite{C1, C2}.
The corresponding result of Corollary \ref{C1} in the smooth case,
i.e., controlling the expectation distance of two coupled Markov
chains in terms of the lower bound of Ricci curvature on a
Riemannian manifold, is a key step in Chen-Wang's method.

A direct proof of Theorem \ref{T1} can be found in \cite{Oll}.
Here for readers' convenience,  we present an analogue of
Chen-Wang's method in the discrete setting, which motivated us to
combine the Ollivier-Ricci curvature and the neighborhood graph
method via random walks. It reflects the deep connection between
eigenvalue estimates and random walks or heat equations.
\begin{proof} We consider the transition probability operator $P:
l^2(V,\mu)\to l^2(V,\mu)$ defined by  $Pf(x):=\sum_y f(y)m_x(y)
=\sum_y f(y)\delta_xP(y) $. Then we have $P^tf(x)=\sum_y
f(y)\delta_x P^t(y).$ We construct a discrete time heat equation,
\begin{equation}\label{16}
 \left\{
   \begin{array}{ll}
     f(x, 0)=f_1(x), \\
     f(x,1)-f(x,0)=\Delta f(x,0), \\
     f(x,2)-f(x,1)=\Delta f(x,1), \\
     \cdots\\
     f(x, t+1)-f(x,t)=\Delta f(x, t),
   \end{array}
 \right.
\end{equation}
where $f_1(x)$ satisfies $\Delta f_1(x)=-\lambda
f_1(x)=Pf_1(x)-f_1(x)$ for $\lambda\neq 0$. Iteratively, one can
find the solution of the above system of equations as
\begin{equation}
 f(x,t)=P^t f_1(x)=(1-\lambda)^t f_1(x).
\end{equation}
We remark here that the solution of the heat equation on a
Riemannian manifold with the eigenfunction as the initial value is
$f(x, t)=f_1(x)e^{-\lambda t}$, which also involves information
about both the eigenvalue $\lambda$ and the eigenfunction
$f_1(x)$.

Then we have for any $\bar{x}, \bar{y}\in V$
\begin{align*}
 |1-\lambda|^t|f_1(\bar{x})-f_1(\bar{y})|&=|f(\bar{x},t)-f(\bar{y},t)|
=|P^tf_1(\bar{x})-P^tf_1(\bar{y})|\\
&\leq \sum_{\bar{x}', \bar{y}'}|f(\bar{x}')-f(\bar{y}')|\xi^{\bar{x}, \bar{y}}_t(\bar{x}', \bar{y}')\\
&\leq \mathrm{Lip}(f_1){\bf E}^{\bar{x}, \bar{y}}d(\bar{X}_t,
\bar{Y}_t) \leq \mathrm{Lip}(f_1)(1-k)^td(\bar{x},\bar{y}).
\end{align*}
Here, $\mathrm{Lip}(f)$ is always finite since the underlying
space $V$ is a finite set. In the last inequality we used
Corollary \ref{C1}. From an analytic point of view, the above
calculation can be seen as a gradient estimate for the solution of
the heat equation.

Since the eigenfunction $f_1$ for the eigenvalue $\lambda$ is
orthogonal to the constant function, i.e. $(f_1, \mathbf{1})_\mu
=0$, we can always find $x_0,y_0\in V$ such that
$|f_1(x_0)-f_1(y_0)|>0$. It follows that \begin{equation*}
  0< |f_1(x_0)-f_1(y_0)|\le
  \left(\frac{1-k}{|1-\lambda|}\right)^t\mathrm{Lip} (f_1)d(x_0,y_0)
  \text{ for all }t.
\end{equation*}
To prevent a contradiction when $t\rightarrow \infty$, we need
$|1-\lambda|\leq 1-k$ which completes the proof.
\end{proof}

As an immediate consequence of Theorem \ref{T1} and Theorem
\ref{T6} we obtain an estimate for the largest eigenvalue in terms
of the number of triangles and loops in the graph.
\begin{Cor} On $G=(V, E)$, the largest eigenvalue satisfies
$$\lambda_{N-1} \leq 2-  \min_{x\sim y} k(x,y),$$ where $k(x,y)$ is
defined in Theorem \ref{T6}.
\end{Cor}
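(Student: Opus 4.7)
My plan is to mimic the proof of Theorem \ref{T1} (Ollivier's bound $\lambda_1\ge k$), but applied to the two-step operator $P^2$ rather than $P$. The conceptual point is that while $1-\lambda_{N-1}$ may be negative (precisely when $\lambda_{N-1}>1$), its square $(1-\lambda_{N-1})^2$ is always nonnegative, so the Ollivier-type contraction argument transfers cleanly from the smallest to the largest eigenvalue.

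More concretely, I would let $f$ be a nontrivial eigenfunction of $\Delta$ for the eigenvalue $\lambda_{N-1}$, so that $Pf=(1-\lambda_{N-1})f$ and hence $P^2 f=(1-\lambda_{N-1})^2 f$. Since the graph is connected we have $\lambda_{N-1}\ge\lambda_1>0$, so $f$ is orthogonal to the constants and therefore non-constant; thus $L:=\mathrm{Lip}(f)>0$, and by finiteness of $V$ there exist $\bar x,\bar y\in V$ realizing the Lipschitz constant, i.e.\ $|f(\bar x)-f(\bar y)|=L\,d(\bar x,\bar y)$.

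Next I would combine Kantorovich duality (\ref{10}) with Corollary \ref{C1} at $t=2$: applying the duality to $f$ gives
\[|P^2 f(\bar x)-P^2 f(\bar y)|\le L\cdot W_1(\delta_{\bar x}P^2,\delta_{\bar y}P^2)\le L(1-k)^2 d(\bar x,\bar y).\]
On the other hand, from $P^2 f=(1-\lambda_{N-1})^2 f$ and the extremal choice of $(\bar x,\bar y)$ one has $|P^2 f(\bar x)-P^2 f(\bar y)|=(1-\lambda_{N-1})^2 L\,d(\bar x,\bar y)$. Dividing by $L\,d(\bar x,\bar y)>0$ yields $(1-\lambda_{N-1})^2\le (1-k)^2$, and since $k\le 1$ this is equivalent to $|1-\lambda_{N-1}|\le 1-k$. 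A trivial case split finishes: if $\lambda_{N-1}\le 1$ then $\lambda_{N-1}\le 1\le 2-k$, and if $\lambda_{N-1}>1$ then $\lambda_{N-1}-1\le 1-k$, so in either case $\lambda_{N-1}\le 2-k$.

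I do not foresee a serious technical obstacle; the only conceptual input is the passage from $P$ to $P^2$, which converts the contraction rate $1-\lambda_{N-1}$ into a nonnegative quantity and thereby makes the Ollivier argument applicable to the top of the spectrum. An alternative route would be to combine Theorem \ref{T3} at $t=2$ with a lower curvature bound $\kappa[2]\ge 1-(1-k)^2$ on the neighborhood graph $G[2]$, but carrying out the argument directly on $G$ via $P^2$ neatly avoids the delicate comparison between the distance $d$ on $G$ and the distance on $G[2]$ that such a detour would require.
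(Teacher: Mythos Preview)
Your argument is correct and is in fact more direct than the paper's. The paper derives the Corollary from Theorem~\ref{T2}, and proves Theorem~\ref{T2} via the full neighborhood-graph machinery: it uses Lemma~\ref{L3} (which needs the metric comparison Lemmas~\ref{L1} and~\ref{L2}) to bound $\kappa[t]$ below by $1-t(1-k)^t$, applies Ollivier's Theorem~\ref{T1} on $G[t]$ to get $\lambda_1[t]\ge 1-t(1-k)^t$, feeds this into Theorem~\ref{T3} to obtain $\lambda_{N-1}\le 1+t^{1/t}(1-k)$ for even $t$, and finally lets $t\to\infty$ to kill the factor $t^{1/t}$. Your route bypasses all of this: working directly on $G$ with the two-step operator and the Wasserstein contraction of Corollary~\ref{C1} yields $(1-\lambda_{N-1})^2\le(1-k)^2$ in one stroke, with no metric comparison between $G$ and $G[t]$ and no limit. (In fact your argument already works at $t=1$ once one takes absolute values, since $|Pf(\bar x)-Pf(\bar y)|=|1-\lambda_{N-1}|\cdot|f(\bar x)-f(\bar y)|$; the passage to $P^2$ is not essential.) The paper's approach, on the other hand, is tailored to its broader program of relating spectra and curvature across the whole family $G[t]$, and Lemma~\ref{L3} is of independent interest there.
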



By considering the graph $\mathcal{K}^{\mathrm{lazy}}_N$ in
Example \ref{Ex2} it is easy to see that  Ollivier's estimates in
Theorem \ref{T1} can be sharp for certain
graphs. However, from Corollary \ref{Re1}  
we know that a positive lower curvature bound is a strong
restriction on a graph. In the open Problem G in \cite{Oll2}
Olliver asks for the possibility to relax this strong assumption.
We will show in the following how to obtain nontrivial estimates
for all graphs that are not bipartite by using the neighborhood
graph technique. This gives an answer to Ollivier's problem in the
finite graph setting.

Before we show how one can improve Olliver's result by using the
neighborhood graph technique, we show how this technique can be
used to obtain upper bounds for $\lambda_{N-1}$ from lower bounds
for $\lambda_{1}$. We do this by carefully comparing the
Olliver-Ricci curvature on a graph $G$ and its neighborhood graphs
$G[t]$.

If we interpret the graph $G=(V, E)$ as a structure $(V, d,
m=\{\delta_x P\})$, then by (\ref{4}) its neighborhood graph $G[t]
= (V,E[t])$ can be considered as a structure $(V, d[t], \{\delta_x
P^t\})$. So the first step should be to estimate the graph metric
on $G$ and the graph metric on $G[t]$ by each other.

\begin{Lem}\label{L1}
For any $x, y\in V$, we have
\begin{equation}\label{18}
\frac{1}{t}d(x, y)\leq d[t](x, y),
\end{equation}where we use the convention that $d[t](x,y) =
\infty$ if $G[t]$ is not connected. By Lemma \ref{L4} this happens
iff $G$ is bipartite and $t$ is even.
\end{Lem}
\begin{proof} If $G[t]$ is not connected, then \eqref{18} is
trivially satisfied. Otherwise, we can find a shortest path
$x_0=x, x_1, \ldots, x_l=y$, between $x$ and $y$ in $G[t]$, i.e.
$l=d[t](x, y)$. For $x_i, x_{i+1}$, $i=0,\ldots, l-1$, by
definition of neighborhood graph, we have $d(x_i, x_{i+1})\leq t$
in $G$. Equivalently,
$$\frac{1}{t}d(x_i, x_{i+1})\leq 1=d[t](x_i, x_{i+1}).$$
Summing  over all $i$, we get
$$\frac{1}{t}\sum_{i=0}^{l-1}d(x_i, x_{i+1})\leq d[t](x, y).$$
Then the triangle inequality of $d$ on $G$ gives (\ref{18}).
\end{proof}
\begin{Rmk}
In fact, when $t$ is larger than the diameter $D$ of the graph
$G$, we have a better estimate
\begin{equation}\label{19}
\frac{1}{t}d(x, y)<\frac{1}{D}d(x, y)\leq 1\leq d[t](x, y).
\end{equation}
\end{Rmk}

\begin{Lem}\label{L2}
If $E\subseteq E[t]$, then $d[t](x, y)\leq d(x, y)$.
\end{Lem}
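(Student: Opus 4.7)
The plan is very short because the statement is essentially tautological once one unpacks the definitions of the two shortest-path metrics.

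First, I would take a shortest path in $G$ between $x$ and $y$: write $x = x_0, x_1, \ldots, x_n = y$ with $n = d(x, y)$ and $\{x_i, x_{i+1}\} \in E$ for each $i = 0, \ldots, n-1$. The only hypothesis is $E \subseteq E[t]$, so each consecutive pair $\{x_i, x_{i+1}\}$ also lies in $E[t]$, i.e., $x_i \sim x_{i+1}$ in $G[t]$. Hence the same sequence of vertices is a walk of length $n$ in $G[t]$ from $x$ to $y$.

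Then I would conclude $d[t](x, y) \leq n = d(x, y)$ directly from the definition of the graph distance $d[t]$ as the minimal number of edges in a path connecting the two vertices in $G[t]$.

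There is no real obstacle here; the content of the lemma is just the monotonicity of shortest-path distance under edge-set inclusion, combined with the observation that the weights play no role in the combinatorial distances $d$ and $d[t]$ (both are defined via unweighted edge counts on the respective edge sets). The only thing to be careful about is to make sure one is using $d$ and $d[t]$ in the unweighted sense introduced in Section~2.3, not in any weighted sense; once that is clear, the proof is immediate.
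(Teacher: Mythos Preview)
Your proof is correct and is exactly the argument the paper has in mind; the paper itself simply states ``The proof is obvious'' without writing it out. You have just unpacked that obvious step.
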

\begin{proof}  The proof is obvious. \end{proof}

The importance of Lemma \ref{L2} comes from the observation that
when the Ollivier-Ricci curvature of the graph $G$ is positive,
$E\subseteq E[t]$ is satisfied for all $t$ and hence Lemma
\ref{L2} is applicable. This can be seen as follows. Corollary
\ref{Re1} implies that if $k>0$, then for all $(x, y)\in E$ we
have $\sharp(x, y)\neq 0$ or $c(x)\neq 0$ or $c(y)\neq 0$ which in
turn implies (see Observation \ref{Obs}) that $(x, y)\in E[t]$ for
all $t$.

\begin{Lem}\label{L3}
Let $k$ be a lower bound for $\kappa$ on $G$. If $E\subseteq
E[t]$, then the curvature $\kappa[t]$ of the neighborhood graph
$G[t]$ satisfies
\begin{equation}\label{21}
\kappa[t](x, y)\geq 1-t(1-k)^t, \,\,\,\forall x, y\in V.
\end{equation}
\end{Lem}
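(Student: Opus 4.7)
The plan is to combine the three preceding lemmata/corollary into one short comparison argument: Corollary \ref{C1} controls the Wasserstein distance of $t$-step random walks with respect to the original metric $d$, while Lemmas \ref{L1} and \ref{L2} compare $d$ and $d[t]$. Translating the former bound into one using $d[t]$ yields the curvature estimate.

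\medskip

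\noindent\textbf{Step 1: Identify the measures on $G[t]$.} By Lemma \ref{L4}(iii), $d_x[t]=d_x$, so the probability measure attached to the vertex $x$ in the graph $G[t]$ is
\[
m_x[t](y)=\frac{w_{xy}[t]}{d_x[t]}=\delta_x P^t(y).
\]
Hence, by definition,
\[
\kappa[t](x,y)=1-\frac{W_1^{[t]}(\delta_x P^t,\delta_y P^t)}{d[t](x,y)},
\]
where $W_1^{[t]}$ denotes the transportation distance computed with respect to the graph metric $d[t]$ of $G[t]$.

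\medskip

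\noindent\textbf{Step 2: Pass from the cost $d[t]$ to the cost $d$.} Since $E\subseteq E[t]$, Lemma \ref{L2} gives $d[t](x',y')\leq d(x',y')$ for every pair $x',y'\in V$. Consequently every transfer plan $\xi$ between $\delta_x P^t$ and $\delta_y P^t$ has cost at most as large under $d[t]$ as under $d$; taking infima,
\[
W_1^{[t]}(\delta_x P^t,\delta_y P^t)\le W_1(\delta_x P^t,\delta_y P^t).
\]

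\medskip

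\noindent\textbf{Step 3: Apply the contraction estimate.} By Corollary \ref{C1} applied to $G$ (with the curvature lower bound $k$),
\[
W_1(\delta_x P^t,\delta_y P^t)\le (1-k)^t\,d(x,y).
\]
Combined with Step 2 and Lemma \ref{L1}, which gives $d(x,y)\le t\,d[t](x,y)$, we get
\[
W_1^{[t]}(\delta_x P^t,\delta_y P^t)\le (1-k)^t\,d(x,y)\le t(1-k)^t\,d[t](x,y).
\]
Dividing by $d[t](x,y)>0$ and subtracting from $1$,
\[
\kappa[t](x,y)=1-\frac{W_1^{[t]}(\delta_x P^t,\delta_y P^t)}{d[t](x,y)}\ge 1-t(1-k)^t,
\]
which is the claimed bound.

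\medskip

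There is no serious obstacle; the only subtle point is recognising that the hypothesis $E\subseteq E[t]$ is exactly what makes the comparison $W_1^{[t]}\le W_1$ (with the \emph{same} transfer plan) valid, so that the already established $d$-Wasserstein contraction from Corollary \ref{C1} can be imported into the geometry of $G[t]$. The factor $t$ in the final estimate is the unavoidable price of rescaling the metric via Lemma \ref{L1}, and it is ultimately the source of the $t$-th root appearing in the main theorem.
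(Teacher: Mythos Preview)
Your proof is correct and follows exactly the same route as the paper: use Lemma~\ref{L2} to pass from $W_1^{[t]}$ to $W_1$, then Corollary~\ref{C1} for the contraction, then Lemma~\ref{L1} to convert $d$ back to $d[t]$. The only addition is that you spell out explicitly (via Lemma~\ref{L4}(iii)) why the probability measures on $G[t]$ coincide with $\delta_x P^t$, which the paper leaves implicit.
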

\begin{proof}
Let $W_1^{d[t]}$, $W_1^d$ indicate the different cost functions
used in these two quantities. By Lemma \ref{L2}, Corollary
\ref{C1} and Lemma \ref{L1}, we get
\begin{align*}
W_1^{d[t]}(\delta_x P^t, \delta_y P^t)&\leq W_1^d(\delta_x P^t,
\delta_y P^t) \leq (1-k)^td(x, y)\leq t(1-k)^td[t](x, y),
\end{align*} where we used in the first inequality that the transportation distance (\ref{9}) is linear
in the graph distance $d(\cdot, \cdot)$. Recalling the definition
of the curvature, we have proved (\ref{21}). \end{proof}

\begin{Rmk}Now we have reached the point where we can give a short
geometric proof of the upper bound of the largest eigenvalue in
Theorem \ref{T1}. First assume that $k>0$. In this case $E\subset
E[t]$ and thus we can use Lemma \ref{L3}. From Lemma \ref{L3} and
$\lambda_{1}\geq k$, we know on $G[t]$,
$$\lambda_1[t]\geq 1-t(1-k)^t.$$
Then by using Corollary \ref{T3} $(i)$, we get for any even number
$t$,
$$\lambda_{N-1}\leq 1+t^{\frac{1}{t}}(1-k).$$
Letting $t\rightarrow +\infty$, we get $\lambda_{N-1}\leq 2-k$. If
we assume that $k\leq 0$, then $\lambda_{N-1} \leq 2-k$ is
trivially satisfied. \end{Rmk}

Using the neighborhood graph technique, we further obtain the
following generalization of Theorem \ref{T1}:
\begin{Thm}\label{T10}
Let $k[t]$ be a lower bound of the  Ollivier-Ricci curvature of
the neighborhood graph $G[t]$. Then for all $t\geq 1$ the
eigenvalues of $\Delta$ on $G$ satisfy
\begin{equation}\label{Generalized Estimate}
1- (1- k[t])^{\frac{1}{t}}\leq \lambda_1 \leq \cdots \leq
\lambda_{N-1}\leq 1+ (1- k[t])^{\frac{1}{t}}.\end{equation}
Moreover, if $G$ is not bipartite, then there exists a $t'\geq 1$
such that for all $t\geq t'$ the eigenvalues of $\Delta$ on $G$
satisfy
$$ 0<1- (1- k[t])^{\frac{1}{t}}\leq \lambda_1 \leq \cdots \leq \lambda_{N-1}\leq  1+ (1-
k[t])^{\frac{1}{t}}<2.$$
\end{Thm}
\begin{Rmk}Olliver-Ricci curvature is not well defined for two vertices which belong to two different connected components. However, by Lemma \ref{L4}, $G[t]$ is
disconnected iff $G$ is bipartite and $t$ is even. In this case we
use the convention in \eqref{Generalized Estimate} that $k[t] =
-\infty$.
\end{Rmk} \begin{proof} Combining Theorem \ref{T1}, and Corollary
\ref{T3} immediately yields (\ref{Generalized Estimate}).

The second part of this Theorem is proved in two steps. In the
first step, we will show that if $G$ is not bipartite then there
exists a $t'$ such that for all $t\geq t'$ the neighborhood graph
$G[t]$ of $G$ satisfies $w_{xy}[t]\neq 0$ for all $x,y\in V$, i.e.
$G[t]$ is a complete graph and each vertex has a loop. In the
second step, we show that any graph that satisfies $w_{xy}\neq 0$
for all $x, y\in V$ has a positive lower curvature bound, i.e.
$k>0$. This then completes the proof.

\textbf{Step 1:} By the definition of the neighborhood graph it is
sufficient to show that for all $t\geq t'$ there exists a path of
length $t$ between any pair of vertices. Since $G$ is not
bipartite it follows from the definition of bipartiteness that
there exists a path of even and a path of odd length between any
pair of vertices in the graph. Given a path of length $L$ between
$x$ and $y$ then we can find a path of length $L+2$ between $x$
and $y$ as follows: We go in $L$ steps from $x$ to $y$ and then
from $y$ to one of its neighbors and then back to $y$. This is a
path of length $L+2$ between $x$ and $y$. Since $G$ is finite, it
follows that there exists a $t'$ such that for every pair of
vertices there exists paths of length $t$ for all $t\geq t'$.

\textbf{Step 2:} Given a graph that satisfies $w_{xy} \neq 0$ for
all $x,y\in V$.

Since each vertex in the graph is a neighbor of all other
vertices, it is clear that we can move the excess mass of $m_x$
for distance $1$ to anywhere. Therefore $$W_1(m_x, m_y)\leq
1-\sum_{x_1\in V}\frac{w_{xx_1}}{d_x}\wedge\frac{w_{x_1y}}{d_y},$$
which implies
$$\kappa(x,y) \geq \sum_{x_1\in
V}\frac{w_{xx_1}}{d_x}\wedge\frac{w_{x_1y}}{d_y}.$$ By Theorem
\ref{T7}, it follows  that the above inequality is in fact an
equality. Hence for all $x, y\in V$, we have
$$\kappa(x, y)=\sum_{x_1\in V}\frac{w_{xx_1}}{d_x}\wedge
\frac{w_{x_1y}}{d_y}\geq N\frac{\min_{x, y}w_{xy}}{\max_x d_x}
\geq\frac{\min_{x, y}w_{xy}}{\max_{x, y}w_{xy}}>0,$$ since the
weights $w_{xy}$ are positive for every pair $(x,y)$ of vertices.

This completes the proof. \end{proof}


\begin{Ex}
We consider the unweighted cycle $\mathcal{C}_5$ consisting of $5$
vertices. The first and largest eigenvalue of $\Delta$ on
$\mathcal{C}_5$ are $\lambda_1=1-\cos\frac{2\pi}{5}\doteq 0.6910$
and $\lambda_4=1-\cos\frac{4\pi}{5}\doteq 1.8090.$ It is easy to
check that the optimal lower bound $k$ for the curvature is $0$.
So in this case Ollivier's estimates in Theorem \ref{T1} only
yield  trivial estimates.

For the neighborhood graphs $\mathcal{C}_5[2], \mathcal{C}_5[3]$,
and  $\mathcal{C}_5[4]$ we find that the optimal lower bound for
the Olliver-Ricci curvature is $k[2]=1/4,k[3]=3/8 $, and $k[4]=
1/2$, respectively. Hence Theorem \ref{T10} yields nontrivial
estimates, even if  the lower bound for the Ricci curvature of the
original graph is zero.
\end{Ex}

From the proof of Theorem \ref{T10} we see that for all graphs
$G$, $k[t]$ eventually becomes positive for sufficiently large
$t$. The next two theorems are concerned with the behavior of
$k[t]$ as $t\to \infty$.
\begin{Thm}\label{Conv}
If $G$ is not bipartite, the limit
$$\lim_{t\to\infty}  \frac{\log(1-k[t])}{t}:=-a$$
exists with $a\in (0, +\infty]$. That means, $k[t]$ behaves like
$1-P(t)e^{-at}$ as $t\to \infty$ where $P(t)$ is a polynomial in
$t$.
\end{Thm}
\begin{proof}
Let $t'$ be as in the the proof of Theorem \ref{T10} and let $s,t
\ge t'$. This immediately implies that
\begin{equation}\label{metrics} d[t+s](x,y) = d[s](x,y)= d[t](x,y)
= 1, \,\,\,\forall x \neq y.\end{equation} Let $x,y$ be the pair
of vertices in $G[t+s]$ which attains
$\min_{x,y}\kappa[t+s](x,y)$, we have
\begin{align*}1-k[t+s] &= W_1^{d[t+s]}(\delta_xP^{t+s},\delta_yP^{t+s})=
W_1^{d[s]}(\delta_xP^{t+s},\delta_yP^{t+s})\\&\le
(1-k[s])W_1^{d[s]}(\delta_xP^{t},\delta_yP^{t})=
(1-k[s])W_1^{d[t]}(\delta_xP^{t},\delta_yP^{t})\\&\le
(1-k[s])(1-k[t]),
\end{align*} where we used several times \eqref{metrics}, in the first inequality Theorem 1 and
in the last inequality \eqref{12} and \eqref{metrics}. It follows
that $\log(1-k[t])$ is subadditive, i.e.
$$\log(1-k[t+s])\le \log(1-k[t])+\log(1-k[s]) \qquad\forall s,t\ge t'.$$
We can suppose that $\log(1-k[t])$ is finite for every $t$.
Otherwise there exists $t_0$ such that $k[t_0]=1$, which implies
$k[t]=1,\,\,\forall t\geq t_0$ and then the limit exists and is
equal to $-\infty$. Therefore we can use an extension of Fekete's
subadditivity Lemma by Hammersley \cite{Hammersley} to conclude
that the limit $-a$ exists and $-\infty\leq -a\leq 0$.
Furthermore, let $\tilde{t}:=t't$, $t=1,2,\ldots$, be a
subsequence. Since $E[\tilde{t}]=E[t']$, we can use Lemma 5 and
obtain
$$(1-k[\tilde{t}])^{\frac{1}{\tilde{t}}}\leq t^{\frac{1}{\tilde{t}}}(1-k[t'])^{\frac{t}{\tilde{t}}}=\left(\frac{\tilde{t}}{t'}\right)^{\frac{1}{\tilde{t}}}(1-k[t'])^{\frac{1}{t'}}.$$
Therefore $\lim_{\tilde{t}\rightarrow
\infty}\log(1-k[\tilde{t}])^{\frac{1}{\tilde{t}}}\leq
\log(1-k[t'])^{\frac{1}{t'}}<0$, which implies $a>0$.
\end{proof}
\begin{Rmk}
If $G$ is unweighted, non-bipartite and has no self-loop, we have
$\frac{N}{N-1}\leq \lambda_{N-1}\leq 1+(1-k[t])^{\frac{1}{t}}$.
Therefore in this case $a\leq \log(N-1)$.
\end{Rmk}
\begin{Thm} If $G$ is not bipartite, we have for $t\ge t'$
$$k[t]\ge 1 - 2N e^{-t(1-\overline{\lambda})}\frac{\max_x
d_x}{\min_xd_x},$$ where $\overline{\lambda} = \max_{i\neq
0}|1-\lambda_i|>0$ and again $t'$ is as in the proof of Theorem
\ref{T10}.
\end{Thm}
\begin{proof}
Let $\pi$ ($\pi(x) = \frac{d_x}{\mathrm{vol}(G)}$) be the
stationary distribution of the random walk. Let $t\ge t'$ and
$x,y$ be the pair of vertices in $G[t]$ which attains
$\min_{x,y}\kappa[t](x,y)$, noting (\ref{metrics}) we have
\begin{align*}
k[t](x,y) &=  1- W_1^{d[t]}(m_x[t],m_y[t])\ge 1 - N
\max_{x,y,z}|m_x[t](z) -m_y[t](z)|
\\&\ge 1 - 2 N
\max_{x,z}|\delta_xP^t(z)-\pi(z)|
\end{align*} where we used in the first inequality that one has to
move at most $N$ times the maximal difference between any two
$m_x$ and $m_y$. Chung \cite{Chung} showed that
$$\frac{\max_{x,z}|\delta_xP^t(z)-\pi(z)|}{\max_x \pi(x)}\leq \max_{x,z}\frac{|\delta_xP^t(z)-\pi(z)|}{\pi(z)}\leq e^{-t(1-\overline{\lambda})}\frac{\mathrm{vol}(G)}{\min_xd_x}.$$
Thus we have
$$\max_{x,z}|\delta_xP^t(z)-\pi(z)|\le e^{-t(1-\overline{\lambda})}\frac{\max_x d_x}{\min_xd_x} $$
which completes the proof.
\end{proof}

\section{Estimates for the largest eigenvalue
in terms of the number of joint neighbors}

In Bauer-Jost \cite{BJ} it is shown that the next lemma is a
simple consequence of Observation \ref{T4}.
\begin{Lem} \label{L5} Let $u$ be an eigenfunction
of $\Delta$ for the eigenvalue $\lambda$. Then,
\begin{equation} \label{highest2}
2-\lambda =\frac{(u, \Delta[2]u)_{\mu}}{(u, \Delta
u)_{\mu}}=\frac{\sum_{x,y} w_{xy}[2](u(x)-u(y))^2}{\sum_{x,y}
w_{xy}(u(x)-u(y))^2}.
\end{equation}
\end{Lem}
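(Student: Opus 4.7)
The plan is to handle the two equalities separately, using Theorem \ref{T4} for the first and the standard Dirichlet form identity for the second.

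For the first equality, I would apply Theorem \ref{T4} with $t=2$, which gives
\[
\Delta[2] = \mathrm{id} - (\mathrm{id} - \Delta)^2 = 2\Delta - \Delta^2.
\]
Since $u$ is an eigenfunction of $\Delta$ with eigenvalue $-\lambda$ (in the sign convention of (\ref{5})), iterating gives $\Delta^2 u = \lambda^2 u$, so
\[
\Delta[2] u = -2\lambda u - \lambda^2 u = -\lambda(2-\lambda)\, u.
\]
Taking the inner product with $u$ yields $(u, \Delta[2]u)_\mu = -\lambda(2-\lambda)(u,u)_\mu$ and $(u, \Delta u)_\mu = -\lambda (u,u)_\mu$. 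Note that $(u,\Delta u)_\mu \neq 0$ since $u$ being a genuine eigenfunction (i.e.\ $u\not\equiv 0$) together with $\lambda>0$ (which we may assume, as otherwise $u$ is constant and the statement is vacuous) rules out the denominator vanishing. Dividing gives the first equality $(u,\Delta[2]u)_\mu / (u,\Delta u)_\mu = 2-\lambda$.

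For the second equality, I would carry out the standard symmetrization used to convert the Laplacian quadratic form into a Dirichlet form. Using $m_x(y)\mu(x) = w_{xy}$ together with the symmetry $w_{xy}=w_{yx}$,
\[
(u, \Delta u)_\mu = \sum_{x} u(x)\!\sum_{y}\bigl(u(y)-u(x)\bigr) w_{xy} = -\tfrac{1}{2}\sum_{x,y} w_{xy}\bigl(u(x)-u(y)\bigr)^2,
\]
where the last step is obtained by averaging with the same sum after swapping $x$ and $y$. Since the neighborhood graph $G[t]$ has symmetric weights $w_{xy}[t]$ (this follows from the reversibility of $\mu$ with respect to $\{m_x\}$, hence also with respect to $\{\delta_x P^t\}$), the identical computation applied to $\Delta[2]$ produces
\[
(u, \Delta[2] u)_\mu = -\tfrac{1}{2}\sum_{x,y} w_{xy}[2]\bigl(u(x)-u(y)\bigr)^2.
\]
The factors of $-1/2$ cancel in the ratio, giving the second equality.

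There is no real obstacle here; the only mild subtlety is verifying that $\{w_{xy}[t]\}$ is symmetric so that the Dirichlet form representation is available on $G[2]$ as well. This is a one-line consequence of $m_x(y)\mu(x) = m_y(x)\mu(y)$ iterated $t$ times, which is already noted in the paper. Once that is in hand, the lemma reduces to the two short computations above.
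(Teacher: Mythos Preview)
Your approach is exactly what the paper indicates: it states only that the lemma ``is a simple consequence of Theorem~\ref{T4}'' (citing \cite{BJ}) and gives no further details, so your derivation via $\Delta[2]=2\Delta-\Delta^2$ together with the standard Dirichlet-form identity is precisely the intended route. The second equality is handled correctly, including the observation that $w_{xy}[t]$ is symmetric by reversibility so that the same symmetrization is available on $G[2]$.

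There is, however, an arithmetic slip in the first equality. You compute $\Delta[2]u = 2\Delta u - \Delta^2 u = -2\lambda u - \lambda^2 u$ and then assert this equals $-\lambda(2-\lambda)u$; but $-\lambda(2-\lambda) = -2\lambda + \lambda^2$, so the two expressions do not agree. The source of the trouble is a sign-convention mismatch: with the paper's definition $\Delta = P - I$ (so $\Delta u = -\lambda u$), the operator $I-\Delta$ equals $2I-P$, not $P$, and Theorem~\ref{T4} taken verbatim does not yield $\Delta[2]=P^2-I$. The clean way to carry out the computation is to bypass this and use $P$ directly: since $Pu=(1-\lambda)u$ and $\Delta[2]=P^2-I$ on $G[2]$, one gets $\Delta[2]u=\bigl((1-\lambda)^2-1\bigr)u=-\lambda(2-\lambda)u$, from which the ratio $2-\lambda$ follows. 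With this correction your argument is complete.
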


Lemma \ref{L5} can be used to derive further estimates for the
largest eigenvalue $\lambda_{N-1}$ from above and below. We
introduce the following notations:
\begin{Def}\label{69}
Let $\tilde{N}_x$ be the neighborhood of vertex $x$ as in Section
\ref{Section3}. The minimal and the maximal number of joint
neighbors of any two neighboring vertices is defined as
$\tilde{\sharp}_1 := \min_{x\sim y}(\sharp(x,y)+c(x)+c(y))$ and
$\tilde{\sharp}_2 := \max_{x\sim y}(\sharp(x,y)+c(x)+c(y))$,
respectively. Furthermore, we define $W := \max_{x,y}w_{xy}$ and
$w :=\min_{x,y; x\sim y}w_{xy}$.
\end{Def}

\begin{Thm}\label{65}We have the following estimates for
$\lambda_{N-1}$:\begin{itemize}\item[$(i)$] If $E(G)\subseteq
E(G[2])$ then
$$\lambda_{N-1} \leq 2 - \frac{w^2}{W}\frac{\tilde{\sharp}_1}{\max_x
d_x}.$$ \item[$(ii)$]If $E(G[2])\subseteq E(G)$ then
$$2 -
\frac{W^2}{w}\frac{\tilde{\sharp}_2}{\min_x d_x}\leq
\lambda_{N-1}$$
\end{itemize}
\end{Thm}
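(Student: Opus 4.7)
\medskip

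\noindent\textbf{Proof plan for Theorem \ref{65}.}

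The plan is to apply Lemma \ref{L5} with $u$ an eigenfunction of $\Delta$ for the eigenvalue $\lambda_{N-1}$, and then to obtain a pointwise (edge-by-edge) comparison between $w_{xy}[2]$ and $w_{xy}$ using the explicit formula
\begin{equation*}
w_{xy}[2] \;=\; d_x\,\delta_x P^2(y) \;=\; \sum_{z\colon z\sim x,\, z\sim y}\frac{w_{xz}\,w_{zy}}{d_z},
\end{equation*}
together with the observation (valid when we allow loops) that the index set of this sum is exactly the set of common neighbors of $x$ and $y$, which has cardinality $\sharp(x,y)+c(x)+c(y)$.

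For part $(i)$, I would first bound each term from below: since every weight is at least $w$ and every degree is at most $\max_x d_x$, each summand is $\geq w^2/\max_x d_x$, giving
\begin{equation*}
w_{xy}[2] \;\geq\; \frac{w^2}{\max_x d_x}\bigl(\sharp(x,y)+c(x)+c(y)\bigr)\;\geq\;\frac{w^2\,\tilde{\sharp}_1}{\max_x d_x}\qquad\text{for every }(x,y)\in E(G).
\end{equation*}
Under the hypothesis $E(G)\subseteq E(G[2])$, the numerator in (\ref{highest2}) sums over a set of pairs containing $E(G)$, so throwing away the extra edges gives a lower bound. Using $w_{xy}\leq W$ on the denominator, one gets
\begin{equation*}
2-\lambda_{N-1} \;\geq\; \frac{\sum_{(x,y)\in E(G)} w_{xy}[2]\,(u(x)-u(y))^2}{\sum_{(x,y)\in E(G)} w_{xy}\,(u(x)-u(y))^2} \;\geq\; \frac{w^2}{W}\cdot\frac{\tilde{\sharp}_1}{\max_x d_x},
\end{equation*}
which rearranges to the desired bound.

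For part $(ii)$, I would reverse the estimates in the analogous way: each summand in the formula for $w_{xy}[2]$ is at most $W^2/\min_x d_x$, hence
\begin{equation*}
w_{xy}[2]\;\leq\;\frac{W^2\,\tilde{\sharp}_2}{\min_x d_x}\qquad\text{for every }(x,y)\in E(G[2]).
\end{equation*}
Under the hypothesis $E(G[2])\subseteq E(G)$, the numerator in (\ref{highest2}) is supported on a subset of the edges of $G$, so enlarging the index set up to $E(G)$ only increases the denominator. Combining this with $w_{xy}\geq w$ on $E(G)$ gives
\begin{equation*}
2-\lambda_{N-1} \;\leq\; \frac{W^2}{w}\cdot\frac{\tilde{\sharp}_2}{\min_x d_x},
\end{equation*}
which yields the stated lower bound on $\lambda_{N-1}$.

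The main technical point, and the only step that requires real care, is the bookkeeping around which edges contribute to each sum under the two different inclusions. The hypothesis $E(G)\subseteq E(G[2])$ in $(i)$ is used precisely to ensure that all edges of $G$ (which support the denominator) also appear in the numerator so that the edge-by-edge comparison can be summed, and symmetrically in $(ii)$ the hypothesis $E(G[2])\subseteq E(G)$ is used so that every pair contributing to the numerator is an actual edge of $G$ on which the lower bound $w_{xy}\geq w$ is valid. Everything else is a direct substitution into Lemma \ref{L5}.
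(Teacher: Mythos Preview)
Your proof is correct and follows essentially the same route as the paper: both apply Lemma \ref{L5} and bound the ratio in \eqref{highest2} via an edge-by-edge comparison of $w_{xy}[2]$ with $w_{xy}$, using the explicit formula $w_{xy}[2]=\sum_{z}\frac{w_{xz}w_{zy}}{d_z}$ together with the observation that the number of nonzero summands equals $\sharp(x,y)+c(x)+c(y)$. The only cosmetic difference is that the paper packages the comparison as a single bound on the quotient $w_{xy}[2]/w_{xy}$, whereas you bound $w_{xy}[2]$ and $w_{xy}$ separately and then divide; the resulting inequalities are identical.
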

\begin{proof}On the one hand, we observe that if $E(G)\subseteq
E(G[2])$, then for every pair of neighboring vertices $x\sim y$ in
$G$
\begin{eqnarray}\label{64}\frac{w_{xy}[2]}{w_{xy}}=\frac{\sum_{z}\frac{1}{d_{z}}w_{xz}w_{zy}}{w_{xy}}
&\geq& \frac{w^2}{W}\frac{\tilde{\sharp}_1}{\max_x
d_x}.\end{eqnarray} On the other hand if $E(G[2])\subseteq E(G)$
then for every pair of neighboring vertices $x\sim y$ in $G(2)$ we
have
\begin{equation}
\label{63}\frac{w_{xy}[2]}{w_{xy}}=\frac{\sum_{z}\frac{1}{d_{z}}w_{xz}w_{zy}}{w_{xy}}\leq
\frac{W^2}{w}\frac{\tilde{\sharp}_2}{\min_x d_x}.\end{equation}
Substituting the inequalities (\ref{64}) and (\ref{63}) in
equation (\ref{highest2}) completes the proof.
\end{proof}
For unweighted regular graphs, Theorem \ref{65} $(i)$ improves the
estimate $\lambda_{N-1}\leq 2-k$. Since $\lambda_{N-1}\leq 2-k$
trivially holds if $k\leq 0$ we only consider the case when $k>0$
is a lower curvature bound. The discussion after Lemma \ref{L2}
shows that $k>0$ implies that $E(G)\subseteq E(G[2])$ and hence we
can apply Theorem \ref{65} (i) in this case. From Theorem \ref{T7}
it follows that for an unweighted graph
$$\kappa(x,y) \leq \frac{\sharp(x,y)}{d_x\vee d_y}+\frac{c(x)}{d_x}+\frac{c(y)}{d_y}$$ for all pairs
of neighboring vertices $x,y$. In the case of a $d$-regular graph
$G$ this implies that a lower bound $k$ for the Ollivier-Ricci
curvature must satisfy
$$k \leq \frac{\tilde{\sharp}_1}{d}.$$ Hence for an unweighted $d$-regular
graph Theorem \ref{65} implies
$$\lambda_{N-1}\leq 2 - \frac{\tilde{\sharp}_1}{d}\leq 2-k.$$
\section*{Acknowledgements}\scriptsize{
\noindent The research leading to these results has received
funding from the European Research Council under the European
Union's Seventh Framework Programme (FP7/2007-2013) / ERC grant
agreement n$^\circ$ 267087.
\\\\
F.B. and S.L. thank Prof. M. von Renesse for discussions leading
to the proof of Theorem \ref{Conv}.
\\\\
F.B. thanks A+B Bauer for their hospitality during his stay in
Wankheim. }

\end{document}